\newtheorem{lemma}{Lemma}
\newtheorem{definition}[lemma]{Definition}
\newtheorem{proposition}[lemma]{Proposition}
\newtheorem{theorem}[lemma]{Theorem}
\newtheorem{remark}[lemma]{Remark}
\newtheorem{corollary}[lemma]{Corollary}
\newtheorem{example}[lemma]{Example}
\begin{document}

\title{ Flat Affine or Projective Geometries on Lie Groups }


 \author{Medina A.*, Saldarriaga O.**, and Giraldo H.**}


\subjclass[2010]{Primary: 57S20, 53C30; Secondary: 17D25, 53C10 \\ Partially Supported by CODI. Estrategia de Sostenibilidad 2014-2015.}
\date{\today}

\begin{abstract}
This paper deals essentially with affine or projective transformations of Lie groups endowed with a flat left invariant affine or projective structure. These groups are called flat affine or flat projective Lie groups. We give necessary and sufficient conditions for the existence of flat left invariant projective structures on Lie groups. We also determine Lie groups admitting flat bi-invariant affine or projective structures. These groups could play an  essential role in the study of homogeneous spaces $M=G/H$  having a flat affine or flat projective structures invariant under the natural action of $G$ on $M$. A. Medina asked several years ago if the group of affine transformations of a flat affine Lie group  is a flat  projective Lie group. In this work we  provide a partial positive answer to this question.
\end{abstract}
\maketitle

Keywords: Flat affine Lie groups, Flat projective Lie groups, Affine transformations, Projective transformations, Projective \'etale representations.

\vskip5pt
\noindent
 * Universit\'e  Montpellier,  Institute A. Grothendieck, UMR 5149 du CNRS, France and Universidad de Antioquia, Colombia

e-mail: alberto.medina@univ-montp2.fr 
\vskip5pt
\noindent
 ** Instituto de Matem\'aticas, Universidad de Antioquia, Colombia

 e-mails: omar.saldarriaga@udea.edu.co, hernan.giraldo@udea.edu.co

\section{Introduction}

One of the aims of this paper is to give a positive answer  to a question raised by the first author several years ago. More precisely we prove, in several cases, that the group $Aff(G,\nabla^+)$ of affine transformations of a Lie group $G$  endowed with a flat left invariant affine structure  $\nabla^+$, admits a flat left invariant  projective (some times affine)  structure.  

Recall that a locally  affine manifold is a  smooth manifold $M$  endowed with a flat and torsion free linear  connection $\nabla$. This means that the corresponding affine connection $\nabla$ is flat. In this case the pair $(M,\nabla)$ is called a flat affine manifold. We will suppose manifolds to be real connected unless otherwise stated. 

The set  of diffeomorphisms $Aff(M,\nabla)$ of $M$ preserving $\nabla$ endowed with the open compact topology and composition is a Lie group (see \cite{K} page 229). That $F\in Aff(M,\nabla)$ means that  $F$ verifies the system of partial differential equations $F_*(\nabla_XY)=\nabla_{F_*X}F_*Y$,   where $F_*$ is the differential of $F$ and $X$ and $Y$ are smooth vector fields on $M$. In particular $F$ preserves geodesics, but in general the group of geodesic preserving diffeomorphisms of $M$ is larger than $Aff(M,\nabla)$.

The aim of flat affine geometry is the study of flat affine manifolds. The local model of  real flat affine geometry is the n-dimensional real affine space $\mathbb{A}^n$ endowed with the usual affine structure $\nabla^0$. This one is given in standard notation by
\[ \nabla_X^0Y=\sum_{j=1}^nX(g_j)\partial_j,\qquad\text{for}\qquad Y=\sum_{j=1}^ng_j\partial_j\]
with $X$ and $Y$ smooth vector fields in $\mathbb{A}^n$.

From now on we identify $\mathbb{A}^n$ with $\mathbb{R}^n$.  We will often see $\mathbb{R}^n$ as the affine subspace $\{(x,1)\mid x\in\mathbb{R}^n\}$ of $\mathbb{R}^{n+1}$. Hence the classical affine group $Aff(\mathbb{R}^n)=\mathbb{R}^n\rtimes_{{Id}_{GL(\mathbb{R}^n)}} GL({\mathbb{R}}^n)$ of affine transformations of $(\mathbb{R}^n,\nabla^0)$ will be identified with a closed subgroup of $GL(\mathbb{R}^n\oplus\mathbb{R})$.

Notice also that every invariant pseudo metric in $\mathbb{R}^n$ determines the same geodesics as $\nabla^0$.

\begin{definition} A flat affine (respectively a flat projective) Lie group is a Lie group endowed with a flat left invariant affine structure (respectively flat left invariant projective structure). The first one will be abreviated as FLIAS. The corresponding infinitesimal object is called an affine Lie algebra (respectively a projective Lie algebra). \end{definition} 

For the local model of flat projective geometry see Section \ref{S:preliminaries}.

An important and difficult open problem is to determine whether a manifold (respectively a Lie group) admits a flat (repectively left invariant) affine or projective structure. Obviously a manifold $H\backslash G$, where $H$ is a discrete co-compact  subgroup of a flat  affine Lie group $G$, inherits a flat affine structure. Let $G$ be a Lie group of Lie algebra $\mathfrak{g}:=T_\epsilon(G)$ with $\epsilon$  the unit of $G$. If $x\in T_\epsilon(G)$, the left invariant vector field determined by $X$ will be denoted by $x^+$. Having a real bilinear product on $\mathfrak{g}$ is equivalent to having a left invariant linear connection $\nabla$ on $G$. The connection $\nabla$ is given by defining $\nabla_XY:=(X_\epsilon\cdot Y_\epsilon)^+$, where $X$ and $Y$ are left invariant vector fields, and extending it so that $\nabla_X(gY)=X(g)Y+g\nabla_XY$, where $g$ is any smooth function on $G$. For instance, the (0)-Cartan connection on a Lie group $G$ is defined by the bilinear product $x\cdot y=\frac{1}{2}[x,y]$ on $\mathfrak{g}$. Having a FLIAS on $G$ is equivalent to having a left symmetric product on $\mathfrak{g}$ compatible with the bracket, i.e., a bilinear product $\cdot$ verifying
\begin{align} \label{E:zerocurvature1}
&(x, y, z)=(y,x,z), \\ \label{E:freeoftorsion1}
&x\cdot y-y\cdot x=[x,y]
\end{align}
where $(x,y,z)$ is the associator of $x,y$ and $z$ in $\mathfrak{g}$ and $[x,y]$ is the bracket given in $\mathfrak{g}$. The pair $(\mathfrak{g},\cdot)$ is called a left symmetric algebra (LSA). This is also equivalent to having an affine \'etale representation of $G$, i.e., a representation with an open orbit and discrete isotropy (see \cite{Kz} and \cite{M}). The group $Aff(G,\nabla^+)$ has been studied in \cite{BM1} in the case where $\nabla^+$ is bi-invariant. Sometimes the group $Aff(G,\nabla^+)$ is also a flat affine Lie group. In particular $Aff(\mathbb{R}^n,\nabla^0)$ is a flat affine group since its co-adjoint representation is \'etale (\cite{BY}, \cite{BM1}, \cite{R}). This fact and other observations led the first author of this paper to ask whether the group $Aff(G,\nabla^+)$ admits a FLIAS or by default a flat left invariant projective structure. 

\noindent
One of the main tools used in our work is the following  well known result (see \cite{FGH} and \cite{Kz}).

Denote by $p:\widehat{M}\longrightarrow M$  the universal covering map of a real $n$-dimensional flat affine manifold $(M,\nabla)$. The pullback $\widehat{\nabla}$  of $\nabla$ by $p$ is a flat affine structure on $\widehat{M}$ and $p$ is an affine map. Moreover, the group $\pi_1(M)$ of deck transformations acts on $\widehat{M}$ by affine transformations. 
\begin{theorem} \label{T:developant} \textbf{(Development Theorem)} There exists an affine immersion $D:\widehat{M}\longrightarrow \mathbb{R}^n$, called the developing map of $(M,\nabla)$, and a  group homomorphism  $A:Aff(\widehat{M},\widehat{\nabla}) \longrightarrow Aff(\mathbb{R}^n,\nabla)$ so that the following diagram commutes
\[ \xymatrix{ \widehat{M} \ar[d]_{F} \ar[r]^{D} &\mathbb{R}^n\ar[d]^{A(F)}\\
\widehat{M} \ar[r]^{D} &\mathbb{R}^n.} \]
\end{theorem}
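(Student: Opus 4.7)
The plan is to produce $D$ by analytic continuation of local affine charts on $\widehat{M}$, and then to exploit the rigidity of affine immersions to obtain $A$. First, I would observe that because $\widehat{\nabla}$ is flat and torsion-free, every point $\widehat{x}\in\widehat{M}$ admits a neighborhood $U$ carrying a $\widehat{\nabla}$-parallel local frame $(e_1,\dots,e_n)$: flatness produces such a frame by parallel transport along radial geodesics from $\widehat{x}$, and torsion-freeness gives $[e_i,e_j]=0$, so Frobenius yields coordinates $(u^1,\dots,u^n)$ on $U$ with $\partial_{u^i}=e_i$. In these coordinates the Christoffel symbols of $\widehat{\nabla}$ vanish identically, so each chart $\phi_U: U\to\mathbb{R}^n$ is an affine immersion onto its image. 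Moreover, two such charts on a connected overlap differ by an affine automorphism of $\mathbb{R}^n$, because an affine immersion from a connected open set into $(\mathbb{R}^n,\nabla^0)$ is determined by its value and differential at a single point.

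Next I would globalize. Fix a basepoint $\widehat{x}_0\in\widehat{M}$ and an initial affine chart $\phi_0$ around it. For an arbitrary $\widehat{x}\in\widehat{M}$ and a continuous path $\gamma$ from $\widehat{x}_0$ to $\widehat{x}$, cover the image of $\gamma$ by finitely many affine chart domains $U_0,U_1,\dots,U_k$ with $U_0\ni\widehat{x}_0$, $U_k\ni\widehat{x}$, and consecutive overlaps nonempty. Then successively postcompose each $\phi_{U_i}$ with a uniquely determined affine transformation of $\mathbb{R}^n$ so that it agrees with $\phi_{U_{i-1}}$ on $U_{i-1}\cap U_i$, and set $D(\widehat{x}):=\phi_{U_k}(\widehat{x})$. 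A standard monodromy argument on the square $[0,1]^2$, applied to any homotopy between two such paths with fixed endpoints, shows that the accumulated affine transformation depends only on the homotopy class of $\gamma$; simple connectivity of $\widehat{M}$ then yields a globally well-defined affine immersion $D:\widehat{M}\to\mathbb{R}^n$.

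For the homomorphism $A$, given $F\in\mathrm{Aff}(\widehat{M},\widehat{\nabla})$, the composition $D\circ F:\widehat{M}\to\mathbb{R}^n$ is again an affine immersion. The uniqueness principle behind $D$ — that any two affine immersions of a connected, simply connected flat affine manifold into $\mathbb{R}^n$ differ by a unique global element of $\mathrm{Aff}(\mathbb{R}^n,\nabla^0)$, obtained by comparing charts near one point and propagating the affine transformation by the analytic-continuation mechanism above — produces a unique $A(F)\in\mathrm{Aff}(\mathbb{R}^n,\nabla^0)$ with $D\circ F = A(F)\circ D$. Functoriality $A(F_1\circ F_2)=A(F_1)\circ A(F_2)$ and preservation of the identity then follow by chasing the resulting commutative diagrams and invoking the same uniqueness. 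The main obstacle is the path-independence step, i.e.\ verifying that the affine transformation accumulated along a chain of overlapping charts depends only on the homotopy class of the path; this is where the simple connectivity of $\widehat{M}$, together with the compactness of $[0,1]^2$ used to refine any homotopy into a grid of small overlapping patches, is decisive.
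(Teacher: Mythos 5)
Your argument is correct. Note that the paper does not actually prove Theorem~\ref{T:developant}: it is quoted as a classical result, attributed to Ehresmann and cited from \cite{FGH} and \cite{Kz}, so there is no in-paper proof to compare against. What you have written is the standard chart-continuation proof of the existence of the developing map for a flat affine (more generally, $(S,T)$-) structure: flatness plus torsion-freeness give local parallel commuting frames, hence affine charts with vanishing Christoffel symbols; connection-preserving transition maps between open subsets of $(\mathbb{R}^n,\nabla^0)$ are locally affine and determined by their $1$-jet on a connected set; simple connectivity of $\widehat{M}$ and a monodromy argument over $[0,1]^2$ make the continuation path-independent; and the holonomy homomorphism $A$ falls out of the uniqueness of the developing map up to postcomposition by an element of $Aff(\mathbb{R}^n,\nabla^0)$ (the element being unique because an affine transformation is determined by its restriction to a nonempty open set, such as the image of a chart under $D$). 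All the key points — the connectedness caveat on overlaps, the role of compactness of $[0,1]^2$, and the uniqueness principle used both to define $A(F)$ and to verify $A(F_1\circ F_2)=A(F_1)\circ A(F_2)$ — are correctly identified, so the proposal is a complete and sound proof of the stated theorem.
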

In particular for every $\gamma\in\pi_1(M)$ we have $D\circ \gamma= H(\gamma)\circ D$ where $H(\gamma):=A(\gamma)$ and $H$ being a group homomorphism. This last homomorphism is called the holonomy representation of $(M,\nabla)$. 

This result seems to be due to Ehresman (see \cite{E}).

Let us consider a  homogeneous space $(S,T)$, i.e., a manifold $S$ and   a Lie group $T$  acting transitively  on $S$. A manifold $M$ is provided of an $(S,T)$-geometry if it admits a smooth subatlas $\{\left(U_i,\varphi_i\right)\}$ so that $\varphi_i(U_i)\subseteq S$ with $\varphi_i^{-1}\circ\varphi_j$ the restriction of an element of $T$, whenever $U_i\cap U_j\ne\emptyset.$ 

\begin{remark} \label{R:inducedgeometrybyembedding} Suppose given an embedding  $(\theta,\rho):(S,T)\longrightarrow (S',T')$ of homogeneous spaces, that is, embeddings $\theta:S\longrightarrow S'$ and $\rho:T\longrightarrow T'$  with $\rho$ a group  homomorphism    so that the following  diagram  commutes
\[ \xymatrix{
S \ar[d]_{t} \ar[r]^{\theta} &S'\ar[d]^{\rho(t)}\\
S \ar[r]^{\theta} &S'} \]
for any $t\in T$. Then it is is clear that an $(S,T)-$geometry determines an $(S',T')-$geometry. In particular an affine geometry determines a  projective geometry. Consequently a flat affine Lie group is a flat projective Lie group. \end{remark}

\begin{remark} \label{R:locallyisom}
Suppose that $G$ and $G'$ are locally isomorphic Lie groups, i.e., $G$ and $G'$ have isomorphic Lie algebras. If $G$ has a flat left invariant (or bi-invariant)  affine or projective  structure then so does $G'$. In the affine case, if the  structure is left invariant, the Lie bracket is the commutator of a left symmetric product. If the structure is bi-invariant, the bracket is the commutator of an associative product (see \cite{M} and \cite{BM1}).\end{remark} 

The main two problems of flat affine (or projective) geometry are to find conditions for the existence of a flat affine (or projective) structure on a manifold, and once such a structure exists on a particular manifold $M$, to determine all the flat affine (or projective) structures on $M$. Benz\'ecri  classified the flat affine structures on closed surfaces (see \cite{B}).
In general it seems hopeless to try to classify general geometric structures on noncompact manifolds (see \cite{G}). There are not known sufficient and necessary conditions for the existence of a flat affine structure on a manifold (see \cite{Sm}, \cite{Sm1}, \cite{KS} and \cite{ST}).

\begin{definition} A Lie group endowed with a left invariant symplectic (respectively K\"ahler) structure is called a symplectic (respectively K\"ahler) Lie group.
\end{definition} 
A symplectic Lie group with a symplectic form $\omega^+$ is always a flat affine Lie group. For instance, Equation \eqref{Eq:leftsymmetricproductbyasymplecticform} determines a flat affine structure on a symplectic Lie group. In particular, a K\"ahler Lie group is a flat affine Lie group.

Symplectic or K\"ahler Lie groups have been studied in \cite{DM}, \cite{Hj}, \cite{LM},  \cite{MR}, \cite{M1}, \cite{DM1} and \cite{L}.  

 The paper is organized as follows. In Section \ref{S:preliminaries} we recall some basic notions about affine and projective geometry to facilitate the reading of this paper. We also describe a method to construct pseudo-K\"ahler Lie groups from pseudo-Hessian Lie groups. Section \ref{S:6}  answers possitively Medina's question in some particular cases. Section \ref{S:LIASLG} shows that there are infinitely many non-isomorphic FLIAS on $G=Aff(\mathbb{R})$. In Section \ref{S:lastsection} we show that, for every FLIAS $\nabla^+$ on $G=Aff(\mathbb{R})$,  the group of affine tranformations $Aff(G,\nabla^+)$ is a  symplectic Lie group and therefore a flat affine Lie group. In Section \ref{S:specialleftsymmetricproducts} we explicit FLIAS on $G$ of special interest in geometry, such as flat Hessian structures, flat Lorentzian structures, affine symplectic structures,    and Kh$\ddot{\text{a}}$lerian structures. The Appendix exhibits the affine \'etale representations and geodesics corresponding to each FLIAS on $G=Aff(\mathbb{R})$.



The reader can refer to \cite{M} for elements on flat affine Lie groups and to \cite{LM}, \cite{MR}, \cite{DM} and \cite{L} for elements on symplectic or  K\"ahler Lie groups. 

\section{Preliminary   remarks } \label{S:preliminaries}

Let $M$ be an $n$-dimensional real manifold. Two torsion free affine connections $\nabla$ and $\nabla'$ on $M$ are projectively equivalent if there exists a smooth 1-form $\phi$ on $M$ verifying $\nabla_XY-\nabla'_XY=\phi(X)Y+\phi(Y)X$ for any smooth vector fields $X$ and $Y$ on $M$. This means that both connections have the same geodesics up to parametrization. The equivalence class $[\nabla]$ of $\nabla$ under this relation is called a projective structure on $M$. A projective isomorphism is a diffeomorphism $F:M\longrightarrow M'$ so that $F^*([\nabla'])=[\nabla]$. A projective structure $[\nabla]$ on $M$ is called projectively flat if for each point $p\in M$ there exists a neighborhood $U$ of $p$ and a diffeomorphism $f:U\longrightarrow f(U)\subseteq \mathbb{R}^n$, so that $f^*([\nabla^0])=[\nabla]$. This means that the Cartan normal connection is flat (see \cite{C} and \cite{K1}). In particular the Weyl tensor vanishes (see \cite{W}, \cite{Ei} and \cite{Th}).

For what follows let us recall also the following basic facts. Let $A(M)$ (respectively $L(M)$) be the bundle of affine frames (respectively linear frames) on an $n$-dimensional manifold $M$. The split exact sequence of Lie groups 
\[ 0\longrightarrow  \mathbb{R}^n \longrightarrow  Aff(\mathbb{R}^n)\begin{matrix} \overset{\gamma}\longleftarrow\\\underset{\beta}\longrightarrow\end{matrix}  GL(\mathbb{R}^n)\longrightarrow  1  \] 
determines morphisms of principal bundles $\hat{\beta}:A(M)\longrightarrow L(M)$ and  $\hat{\gamma}:L(M)\longrightarrow A(M)$ such that $\hat{\beta}\circ\hat{\gamma} =id$. So, if $\widehat{\omega}$ is a connection form on $A(M)$ (i.e., a generalized affine connection) we have that \[\hat{\gamma}^*\widehat{\omega}=\omega+\varphi,\] where $\omega$  is a $gl(n,\mathbb{R})$-valued 1-form and $\varphi$ is an $\mathbb{R}^n$-valued 1-form on $L(M).$ In the case where $\varphi$ is the canonical form on $L(M)$, the $1$-form $\widehat{\omega}$ is called an affine connection on $M$. Hence, if $\Theta$ and $\Omega$ are respectively the torsion and the curvature forms of a linear connection on $M$, we will have that $\hat{\gamma}^*\widehat\Omega=\Theta+\Omega$, where $\widehat{\Omega}$ is the curvature form of an affine connection $\omega.$ In summary, a flat and torsion free linear connection can be viewed as a flat affine connection on $M$ (see \cite{K}). 

E. Cartan studied (and solved) the problem of determining when the (0)-Cartan connection is projectively flat (see \cite{C} and \cite{N}). 


\begin{remark}  It is well known that the Levi-Civita connection corresponding to a pseudo-Riemannian metric is projectively flat if and only if its scalar curvature is constant (see \cite{Ei}). 

In particular the  $n$-dimensional real projective space $P_n$ has a natural flat projective structure.  The model space of the real flat projective geometry is the pair $(P_n,Aut(P_n)).$ In what follows we will identify $P_n$ with the homogeneous space $Aut(P_n)/Is$, where $Is$ denotes the isotropy in any point of $P_n$ for the natural action.
\end{remark} 

Notice that $Aut(P_n)=SL(n+1,\mathbb{R})/\text{center}$ does not admit flat left invariant affine connection. E. Vinberg in \cite{V} developed a theory for projectively homogeneous bounded domains in $\mathbb{R}^n$ (see also \cite{N}). The study of projective geometry can be done via Cartan connections (see \cite{C}, \cite{K1} and \cite{Ag}) or using the theory of jets due to Ehresman (\cite{E}, \cite{K1} and \cite{T}).

Every left invariant (respectively bi-invariant) pseudo-Riemannian metric on a Lie group determines a left invariant (respectively bi-invariant) projective structure (non necessarily flat).

Next we give a sufficient and necessary condition for the existence of a flat left invariant projective structure on a Lie group.

\begin{theorem} Let $G$ be a real $n$-dimensional connected Lie group, $P_n$ the n-dimensional projective space over the field $\mathbb{R}$  and $Aut(P_n)$ the group of projective transformations of $P_n$. The group $G$ admits a flat left invariant projective structure if and only if there exists a Lie group homomorphism $\rho:G\longrightarrow Aut(P_n)$ having an open orbit with discrete isotropy, that is, $\rho$ is a projective \'etale representation of $G$.
\end{theorem}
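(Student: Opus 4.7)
The plan mirrors the classical Koszul--Vinberg equivalence between flat affine structures on a Lie group and affine étale representations. I treat the two implications separately.

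For sufficiency, suppose $\rho:G\to Aut(P_n)$ has an open orbit with discrete isotropy. Fix $p_0$ in the open orbit $\mathcal{O}:=\rho(G)\cdot p_0$, let $H:=\mathrm{Stab}_G(p_0)$, and consider the orbit map $\psi:G\to\mathcal{O}$, $g\mapsto\rho(g)(p_0)$. Because $\mathcal{O}$ is open in $P_n$ and $H$ is discrete, the identification $G/H\cong\mathcal{O}$ forces $\dim G=n$ and exhibits $\psi$ as a local diffeomorphism. The restriction to $\mathcal{O}$ of the standard flat projective structure on $P_n$ pulls back under $\psi$ to a flat projective structure on $G$. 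Left invariance is immediate from the equivariance relation $\psi\circ L_g=\rho(g)\circ\psi$, since $\rho(g)\in Aut(P_n)$ preserves the projective structure of $P_n$.

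For necessity, suppose $(G,[\nabla^+])$ is a flat projective Lie group. Apply the projective analog of Theorem \ref{T:developant} to obtain a projective local diffeomorphism $D:\widetilde{G}\to P_n$ from the universal cover, together with a canonical group homomorphism $A$ from the group of projective automorphisms of $(\widetilde{G},\widetilde{[\nabla^+]})$ to $Aut(P_n)$ satisfying $D\circ F=A(F)\circ D$. Left invariance of $[\nabla^+]$ implies that every left translation $L_g$ on $G$ is a projective automorphism and lifts to a projective automorphism $\widetilde{L_g}$ of $\widetilde{G}$; the composition $g\mapsto A(\widetilde{L_g})$ then defines $\rho:G\to Aut(P_n)$. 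Set $p_0:=D(\tilde e)$. One computes $\rho(G)\cdot p_0=D(\widetilde G)$, which is open in $P_n$ because $D$ is a local diffeomorphism and $\dim\widetilde G=n$; the stabilizer of $p_0$ is discrete by the rigidity of projective local diffeomorphisms fixing a point (any two projective maps of $P_n$ agreeing to first order at a point coincide, so the stabilizer embeds in the isotropy of $p_0$ in $Aut(P_n)$ and inherits discreteness from the local behavior of the orbit map).

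The main obstacle is descending the construction from $\widetilde{G}$ to $G$: the developing map and the action by left translations produce, a priori, only a representation of the universal cover, so one must verify that $g\mapsto A(\widetilde{L_g})$ is well-defined on $G$, equivalently that $\pi_1(G)$ is annihilated. I expect to handle this infinitesimally: produce a Lie algebra homomorphism $\mathfrak{g}\to\mathfrak{sl}(n+1,\mathbb{R})$ (the Lie algebra of $Aut(P_n)$) from the projective structure, and integrate, noting that this differential agrees by construction with the infinitesimal generators of the left-translation action through $D$. This is the projective analog of associating to a left symmetric product $(\mathfrak{g},\cdot)$ the affine representation $x\mapsto(L_x,x)\in\mathfrak{gl}(\mathfrak{g})\ltimes\mathfrak{g}$ mentioned in the introduction, and controlling this algebraic construction is the essential content of the converse implication.
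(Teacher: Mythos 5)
Your sufficiency argument (\'etale representation implies structure) is essentially the paper's: pull back the flat projective structure of the open orbit through the orbit map, which is a local diffeomorphism, and obtain left invariance from equivariance. For necessity you take a genuinely different route from the paper: you develop the structure on the universal cover and try to read $\rho$ off the homomorphism $A$ of the Development Theorem, whereas the paper realizes the projective structure as a reduction $P$ of the $2$-frame bundle $F^2(G)$ with structure group $Is_0$, interprets flatness as a bundle isomorphism of $P$ with $Aut(P_n)\to P_n$, and extracts $\rho(\sigma)$ from the induced automorphism $(L_\sigma)_*$ of $P$ --- thereby producing $\rho$ on $G$ itself without ever passing to $\widetilde{G}$.

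The gap you flag in your own necessity argument is genuine, and your proposed repair does not close it. A lift $\widetilde{L_g}$ is only determined up to precomposition with a deck transformation $\gamma\in\pi_1(G)$, and $A(\gamma)$ is by definition the holonomy $H(\gamma)$, so $A(\widetilde{L_g})$ is well defined only modulo the holonomy group. What the construction canonically gives is a homomorphism $\widetilde{\rho}:\widetilde{G}\to Aut(P_n)$, $\tilde g\mapsto A(L_{\tilde g})$, whose restriction to $\pi_1(G)\subset Z(\widetilde{G})$ (deck transformations of a covering of Lie groups are left translations by central elements) is exactly the holonomy representation; this is nontrivial in general, so $\widetilde{\rho}$ need not factor through $G$. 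Integrating a Lie algebra homomorphism $\mathfrak{g}\to\mathfrak{sl}(n+1,\mathbb{R})$ cannot help, because integration again produces only a homomorphism of the simply connected group $\widetilde{G}$: it reconstructs $\widetilde{\rho}$ and is blind to $\pi_1(G)$. Concretely, $S^1=\mathbb{R}/\mathbb{Z}$ with the projective structure developed by the inclusion $\mathbb{R}\subset P_1$ of an affine chart has holonomy a nontrivial translation, and the resulting $\widetilde{\rho}:\mathbb{R}\to Aut(P_1)$ does not descend; the conclusion for $S^1$ is rescued only by switching to a different structure (the one developed onto all of $P_1$ via $PSO(2)$, which has trivial holonomy). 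So you must either prove the statement for $\widetilde{G}$ and then supply a separate argument producing a representation of $G$ itself, or work on $G$ directly as the paper does. Finally, a small correction: projective transformations of $P_n$ are determined by their $2$-jets, not their $1$-jets (the isotropy $Is_0$ contains a nontrivial subgroup acting trivially to first order), so your rigidity remark is misstated; discreteness of the isotropy follows more simply from the openness of the orbit together with $\dim G=n$, which makes the orbit map a local diffeomorphism.
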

\begin{proof} Let us suppose that $G$ is endowed with a flat left invariant projective structure. Let $F^2(G)$ be the bundle of 2-frames on $G$. The manifold $F^2(G)$ is a fiber bundle over $G$ with structure group $J_\epsilon^2(g)$, the group of 2-jets with $g$ a local diffeomorphism of $G$ fixing $\epsilon.$ The projective structure $P$ is a subbundle of the bundle $F^2(G)$ with structure group isomorphic to $Is_0$, where $Is_0$ is the subgroup of automorphisms of $P_n$ fixing a point $0$, called the origin.  

The transformation $L_\sigma$ of $G$ induces, in a natural manner, an automorphism $(L_\sigma)_*:F^2(G)\rightarrow F^2(G)$. That the projective structure is left invariant, means that the restriction of $(L_\sigma)_*$ to $P$ is an automorphism of $P$, this means that the  following diagram commutes
\[ \xymatrix{
P \ar[d]_{\Pi} \ar[r]^{(L_\sigma)_*} &P\ar[d]^{\Pi}\\
G \ar[r]^{L_\sigma} &G.} \]
The group $Aut(P_n)$ can be considered as a fiber bundle over $P_n$ with structure group $Is_0$. That the structure $P$ is flat means that the fiber bundles $P$ and $Aut(P_n)$ are isomorphic, i.e., there is a commutative diagram 
\[ \xymatrix{
P \ar[d]_{\Pi} \ar[r]^{\psi_*} &Aut(P_n)\ar[d]^{\Pi}\\
G \ar[r]^{\psi} &P_n.} \]

From the isomorphism of fiber bundles, we get that to any diffeomorphism $L_\sigma$ of $G$, there corresponds $\rho(\sigma)\in Aut(P_n)$. Since $L_\sigma\circ L_\tau=L_{\sigma\tau}$, it follows that $\rho$ is a Lie group homomorphism. 

To show that $\rho$ is an \'etale representation, let us choose $\{(U_j,\phi_j:U_j\longrightarrow O_j\subseteq P_n)_{j\in J}\}$  a projective atlas, that is, a smooth atlas so that $\phi_j\circ \phi_i^{-1}\in Aut(P_n)$, whenever $U_j\cap U_i\ne\emptyset$. Notice that the atlas can be chosen so that $\Pi^{-1}(U_j)$ is diffeomorphic to $U_j\times Aut(P_n)$. 

Let $\gamma$ be the natural action of $G$ on $P_n$ associated to $\rho$, that is,
\[ \begin{array}{ccclc} \gamma:&G\times P_n&\longrightarrow &P_n\\ &(\sigma,[v])&\mapsto&\rho(\sigma)([v]),\end{array}\]
where $[v]\in\mathbb{R}^{n+1}\setminus\{0\}/\mathbb{R}^*$. By  choosing $(U,\phi:U\longrightarrow O)$ a local system of projective coordinates and $[v]\in O$, then there exists a unique $\tau$ so that $\phi(\tau)=[v]$. It is easy to see that $[v]$ is of open orbit (and discrete isotropy).

Conversely,  let $\rho:G\longrightarrow Aut(P_n)$ be a projective \'etale representation of $G$. If $[v]\in P_n$ is of open orbit and discrete isotropy, then $Orb([v])$ inherits a flat projective structure from those of $P_n$. Moreover, the orbital map $\pi:G\longrightarrow Orb([v])$ defined by  $\pi(\sigma)=\rho(\sigma)([v])$ is a covering map. It follows that the pullback of the flat projective structure on $Orb([v])$ lifts to a flat projective structure on $G$. The left invariance of this structure follows from the equivariance of the map $\pi$  by the actions of $G$ on $G$ by left multiplications and  of $\rho(G)$ on $Orb([v])$ and from the fact the following diagram  is commutative 
\begin{equation} \label{Eq:projectiverep} \xymatrix{
G \ar[d]_{\pi} \ar[r]^{L_\tau} &G\ar[d]^{\pi}\\
Orb([v]) \ar[r]^{\rho(\tau)} &Orb([v]).} \end{equation}
\end{proof}

From the arguments used  in the previous proof we can deduce the following.

\begin{corollary} Let $G$ be a real Lie group of dimension $n$ with $\mathfrak{g}=Lie(G)$. The following statements are equivalent.
\begin{enumerate}
\item $G$ admits a flat left invariant projective structure.
\item there exists a representation $\rho:G\longrightarrow Aut(P_n)$ and a point $[p]\in P_n$ whose orbit is open and with discrete isotropy, i.e., $G$ admits a  projective \'etale representation.
\item there exists a linear representation $\theta:\mathfrak{g}\longrightarrow sl(n+1,\mathbb{R})$ and a point $w\in\mathbb{R}^{n+1}$ so that $\theta(\mathfrak{g})(w)+\mathbb{R}w=\mathbb{R}^{n+1}$ (see \cite{El}).
\end{enumerate}
\end{corollary}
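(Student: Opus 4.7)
The plan is to use the theorem just proved to handle the equivalence $(1)\Leftrightarrow(2)$ for free, and then establish $(2)\Leftrightarrow(3)$ by passing between the representation $\rho$ and its differential $\theta=d\rho_\epsilon$. Since $\mathrm{Aut}(P_n)=SL(n+1,\mathbb{R})/\mathrm{center}$, its Lie algebra is $sl(n+1,\mathbb{R})$, so differentiation automatically lands $\theta$ in the right place.

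For $(2)\Rightarrow(3)$, I would fix $[v]\in P_n$ whose $G$-orbit is open with discrete isotropy and consider the orbital map $\pi:G\to P_n$, $\sigma\mapsto\rho(\sigma)[v]$. Differentiating at $\epsilon$, for $x\in\mathfrak{g}$ one obtains $d\pi_\epsilon(x)=\theta(x)\cdot v\bmod \mathbb{R}v$ under the standard identification $T_{[v]}P_n\cong\mathbb{R}^{n+1}/\mathbb{R}v$. The orbit is open precisely when $d\pi_\epsilon$ is surjective, which translates verbatim to the condition $\theta(\mathfrak{g})(v)+\mathbb{R}v=\mathbb{R}^{n+1}$ by choosing $w=v$.

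For $(3)\Rightarrow(2)$, the idea is to integrate $\theta$. Since $G$ need not be simply connected, I would first pass to the universal cover $\widetilde{G}$: the Lie algebra homomorphism $\theta:\mathfrak{g}\to sl(n+1,\mathbb{R})$ integrates uniquely to a smooth homomorphism $\widetilde\rho:\widetilde{G}\to SL(n+1,\mathbb{R})$, which composed with the projection to $\mathrm{Aut}(P_n)$ yields a Lie group homomorphism $\widetilde{G}\to\mathrm{Aut}(P_n)$. The orbital map at $[w]$ has differential given by $x\mapsto\theta(x)w\bmod \mathbb{R}w$, so the hypothesis in $(3)$ makes this differential surjective; hence the orbit of $[w]$ is open, and because $\dim\widetilde{G}=\dim G=n=\dim P_n$, the isotropy is automatically discrete. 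Applying the theorem to $\widetilde{G}$ yields a flat left invariant projective structure on $\widetilde{G}$, and by Remark~\ref{R:locallyisom} the locally isomorphic group $G$ inherits one as well. A second application of the theorem produces the desired projective étale representation of $G$.

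The only genuinely subtle point is the non-simple-connectedness of $G$, which prevents us from integrating $\theta$ directly over $G$; the workaround is to integrate over $\widetilde{G}$ and then invoke the local-isomorphism remark together with the theorem to bring the structure back down to $G$. Everything else is a routine unwinding of the identification $T_{[v]}P_n\cong\mathbb{R}^{n+1}/\mathbb{R}v$ and the chain rule applied to orbital maps.
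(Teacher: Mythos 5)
Your proposal is correct, and for the equivalence $(1)\Leftrightarrow(2)$ and the implication $(2)\Rightarrow(3)$ it matches the paper: the paper also invokes the preceding theorem for $(1)\Leftrightarrow(2)$, and its argument for $(2)\Rightarrow(3)$ --- passing to the saturated open preimage $p^{-1}(Orb([v]))$ in $\mathbb{R}^{n+1}\setminus\{0\}$ and using that $\rho_{*,\epsilon}$ lands in $sl(n+1,\mathbb{R})$ --- is just a less explicit version of your computation of $d\pi_\epsilon$ under the identification $T_{[v]}P_n\cong\mathbb{R}^{n+1}/\mathbb{R}v$. Where you genuinely diverge is $(3)\Rightarrow(2)$: the paper dismisses this with the single sentence that it ``follows by using the exponential map,'' which only integrates $\theta$ locally (or on the simply connected group), whereas you correctly flag that $\theta$ need not integrate to a homomorphism of $G$ itself when $G$ is not simply connected, and you repair this by integrating over $\widetilde{G}$, obtaining an open orbit with discrete isotropy there (the dimension count $\dim\widetilde{G}=n=\dim P_n$ giving discreteness is right), then descending via Remark~\ref{R:locallyisom} and re-applying the theorem to $G$ to recover an honest projective \'etale representation of $G$. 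This detour through $(3)\Rightarrow(1)\Rightarrow(2)$ is slightly longer than what the paper intends, but it is the more defensible argument: it actually closes the gap the paper leaves open, at the cost of relying on the local-isomorphism remark rather than a direct construction of $\rho$ on $G$.
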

\begin{proof} Assertions 1. and 2. are equivalent from the previous theorem. Now we show 2. implies 3. Let $p:\mathbb{R}^{n+1}\setminus\{0\}\longrightarrow P_n$  be the canonical projection. Since $Orb([v])$ is open in $P_n$, then $p^{-1}(Orb([v]))$ is  open in $\mathbb{R}^{n+1}$ saturated by the relation $\sim$ determined by $u\sim v$ if and only if $v=\lambda u$, with $\lambda\ne0$. From this and the fact that  $\rho_{*,\epsilon}:\mathfrak{g}\longrightarrow sl(n+1,\mathbb{R})$ is a Lie algebra homomorphism, it follows  the  existence of $w\in \mathbb{R}^{n+1}$ verifying  $\rho_{*,\epsilon}(\mathfrak{g})(w)+\mathbb{R}w=\mathbb{R}^{n+1}$. 

The proof of 3. implies 2. follows by using the exponential map. 
\end{proof}

\begin{remark} It is well known that a semisimple Lie group $G$ of finite dimension $m$ does not admit a FLIAS (see \cite{H}). This follows from observing that such a structure on $G$ has to be radiant and this implies $m=0$. In particular $SL(n+1,\mathbb{R})$ does not admit a FLIAS, nevertheless it does admit a flat left invariant projective structure. \end{remark}

\begin{example} Consider the group homomorphism $\rho:SL(2,\mathbb{R})\longrightarrow Aut(P_3)$ defined by $\rho(u)=\left[\begin{matrix} u&0\\0&I_2\end{matrix}\right]$. Viewing $\rho$ as a homomorphism $\rho:SL(2,\mathbb{R})\longrightarrow SL(3,\mathbb{R})\subseteq GL(4,\mathbb{R})$, it is easy to verify that $\rho_{*,\epsilon}:sl(2,\mathbb{R})\longrightarrow sl(3,\mathbb{R})\subseteq gl(4,\mathbb{R})$ is given by $\rho_{*,\epsilon}(u')= \left[\begin{matrix} u'&0\\0&E\end{matrix}\right]$   where $E=\left[\begin{matrix}0&0\\0&1\end{matrix}\right]$ and that  $\rho_{*,\epsilon}(sl(2,\mathbb{R}))+\mathbb{R}e_3=\mathbb{R}^4$, where $e_3=(0,0,1,0)$. Therefore $\rho$ is a projective \'etale representation of $SL(2,\mathbb{R})$. 

Consequently $SL(n,\mathbb{R})$, with $n\geq3$, admits a flat left invariant projective structure. 
\end{example}

We now recall the definition of pseudo-K\"ahler Lie group, then we show that the symplectic  cotangent Lie group of a Lie group equipped with  a pseudo-Hessian structure is a pseudo-K\"ahler Lie group.

\begin{definition} \label{D:pseudokhaler} Let $(G,\Omega^+)$ be a symplectic Lie group. If $G$ is provided of a left invariant complex structure $j^+$ so that 
\[ g^+(a,b):=\omega(a,j^+(b)) \] 
defines a left invariant pseudo-Riemannian metric on $G$, then  $(G,g^+,j^+)$ is called a  pseudo-K\"ahler Lie group. 


\end{definition}
For the study of symplectic or pseudo-K\"ahler Lie groups   see \cite{MR1}, \cite{DM}, \cite{DM1}  and \cite{L}.

\begin{theorem} \label{P:cotangentaskalerliegroup} Given a  flat affine pseudo-Hessian Lie group   $(G,\nabla^+,g^+)$  the cotangent bundle $T^*(G)=\mathfrak{g}^*\rtimes_{\rho}G$, endowed with the product $(\alpha,\sigma)(\beta,\tau)=(\alpha+\rho(\sigma)(\beta),\sigma\tau)$ where $\rho_{*,\epsilon}(a)=-{}^tL_a,$ for $a\in\mathfrak{g}$, is a pseudo-K\"ahler Lie group. 
\end{theorem}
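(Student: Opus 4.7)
The plan is to construct, at the Lie algebra level $\mathfrak{h}:=\mathrm{Lie}(T^*(G))=\mathfrak{g}^*\oplus\mathfrak{g}$, a symplectic cocycle, a complex structure, and a compatible pseudo-Riemannian inner product, then extend each by left translation. From the semidirect product formula and the hypothesis $\rho_{*,\epsilon}(a)=-{}^tL_a$, the bracket on $\mathfrak{h}$ is
\[ [(\alpha,a),(\beta,b)]=(a\cdot\beta-b\cdot\alpha,\ [a,b]),\quad\text{with}\quad(a\cdot\beta)(c):=-\beta(a\cdot c). \]

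First I would take the canonical duality pairing $\Omega((\alpha,a),(\beta,b)):=\alpha(b)-\beta(a)$ as candidate symplectic cocycle. A direct cyclic-sum computation shows $\sum_{\mathrm{cyc}}\Omega([\,\cdot\,,\cdot\,],\cdot)=0$ using only the torsion-free identity $a\cdot b-b\cdot a=[a,b]$. Since $\Omega$ is manifestly skew and non-degenerate, this yields a left-invariant symplectic form $\Omega^+$ on $T^*(G)$.

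Next, the pseudo-Hessian metric $g$ provides a linear isomorphism $\phi:\mathfrak{g}\to\mathfrak{g}^*$, $\phi(a):=g(a,\cdot)$, and I would define
\[ J(\alpha,a):=(-\phi(a),\ \phi^{-1}(\alpha)). \]
Then $J^2=-\mathrm{Id}_{\mathfrak{h}}$, so $J$ extends to a left-invariant almost complex structure $j^+$ on $T^*(G)$. The main technical step is integrability of $j^+$, i.e., vanishing of the Nijenhuis tensor $N_J(X,Y)=[X,Y]+J[JX,Y]+J[X,JY]-[JX,JY]$. Writing $\alpha=\phi(a')$, $\beta=\phi(b')$ and pairing with an arbitrary $c\in\mathfrak{g}$, each of the $\mathfrak{g}^*$- and $\mathfrak{g}$-components of $N_J$ reduces to a signed sum of terms of the form $g(x,y\cdot c)$ together with the corrections $g([a',b],c)$ and $g([a,b'],c)$. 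The Hessian identity
\[ g([x,y],z)=g(x,y\cdot z)-g(y,x\cdot z), \]
which encodes the symmetry of $\nabla^+g$, replaces each correction by a sum of terms of the first kind, producing an exact pairwise cancellation. I expect this Nijenhuis computation to be the principal obstacle and the sole place where the Hessian hypothesis is genuinely used; the LSA associator symmetry enters only implicitly, via the well-definedness of the semidirect bracket.

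Finally, set $h(X,Y):=\Omega(X,JY)$. At the Lie algebra level one computes
\[ h((\alpha,a),(\beta,b))=g(\phi^{-1}(\alpha),\phi^{-1}(\beta))+g(a,b), \]
which is symmetric (both summands are symmetric in $X\leftrightarrow Y$ since $g$ is) and non-degenerate (setting $b=0$ forces $\alpha=0$, and then $\beta=0$ forces $a=0$). By left invariance, $h$ extends to a left-invariant pseudo-Riemannian metric $h^+$ on $T^*(G)$ satisfying $h^+(X,Y)=\Omega^+(X,j^+Y)$. This exhibits $(T^*(G),h^+,j^+)$ as a pseudo-K\"ahler Lie group in the sense of Definition \ref{D:pseudokhaler}.
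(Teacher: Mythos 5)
Your proposal is correct and follows essentially the same route as the paper: the duality pairing $\alpha(b)-\beta(a)$ as the symplectic cocycle, the complex structure $J(\alpha,a)=(-g(a,\cdot),\,\phi^{-1}(\alpha))$ built from the pseudo-Hessian metric, and $\Omega(\cdot,J\cdot)$ as the compatible metric. The only difference is one of detail: the paper merely asserts integrability of $j$ and non-degeneracy of the resulting form, whereas you spell out the cocycle verification (via torsion-freeness) and identify the Nijenhuis cancellation as the precise point where the Hessian identity $g([x,y],z)=g(x,y\cdot z)-g(y,x\cdot z)$ is used — which is accurate.
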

\begin{proof} Denote by $t^*\mathfrak{g}$ the Lie algebra of $T^*G$. The dual space $\mathfrak{g}^*$ of the left symmetric algebra $\mathfrak{g}$ is a $\mathfrak{g}$-bimodule by the actions $\alpha\cdot b=\alpha\circ R_b$ and $b\cdot\alpha=ad^*(b)(\alpha)$. Then the vector space $t^*\mathfrak{g}=\mathfrak{g}^*\times \mathfrak{g}$ can be viewed as a left symmetric algebra, extensi\'on of the left symmetric algebra $\mathfrak{g}$ by the $\mathfrak{g}$-bimodule $\mathfrak{g}^*$ as described in \cite{MR}. The pair  $(t^*\mathfrak{g},\omega)$, with $\omega((\alpha,\sigma),(\beta,\tau))=\alpha(\tau)-\beta(\sigma)$, is a symplectic Lie algebra. The restriction on $\mathfrak{g}$ of the product determined by \eqref{Eq:leftsymmetricproductbyasymplecticform} agrees with the given product of $\mathfrak{g}$. On the other hand, the form defined by 
\[ \begin{array}{lccc} j(x)=-g(x,\cdot),&\text{  for  }&x\in\mathfrak{g}\\ 
j(\alpha)=x &\text{ if }&\alpha=g(x,\cdot) \end{array}\]
is an integrable complex structure. Finally, $\langle x,y\rangle:=\omega(x,j(y))$ is a non-degenerate bilinear form on $t^*\mathfrak{g}$. 
\end{proof}

\section{Projective or affine geometries on  transformation groups } \label{S:6}

Results on this section give a positive answer to  Medina's question in some particular cases.

\begin{theorem} \label{T:structureonglnliftedtoaffine} The $\mathbb{R}-$bilinear product  on $aff(\mathbb{R}^n)=\mathbb{R}^n\rtimes_{id}gl(\mathbb{R}^n)$ given by
\begin{equation} \label{Eq:productoasociativoenaff} (a,f)\cdot (b,g)=(f(b),f\circ g) \end{equation}
is associative and compatible with the Lie bracket of $aff(\mathbb{R}^n)$. Hence it defines a flat bi-invariant structure $\widetilde{\nabla}$ on the group $Aff(\mathbb{R}^n,\nabla^0)$. Moreover, if $\circ$ denotes the linear connection determined by composition of linear endomorphisms, then the canonical sequence 
\[ 0\longrightarrow (\mathbb{R}^n,\nabla^0)\longrightarrow (Aff(\mathbb{R}^n,\nabla^0),\widetilde{\nabla})\longrightarrow GL(\mathbb{R}^n,\circ)\longrightarrow 1\]
is a split exact  sequence of flat bi-invariant affine Lie groups. 
\end{theorem}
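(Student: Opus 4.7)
The plan is to verify three things in turn: associativity of the product \eqref{Eq:productoasociativoenaff}, compatibility with the semidirect-product bracket on $aff(\mathbb{R}^n)$, and exactness of the canonical sequence as a sequence of \emph{affine} homomorphisms. The first two together give, via the principle recalled in Remark \ref{R:locallyisom} (a bi-invariant flat affine structure on a Lie group is the same as an associative product on its Lie algebra whose commutator is the bracket), the flat bi-invariant structure $\widetilde{\nabla}$ on $Aff(\mathbb{R}^n,\nabla^0)$.

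First I would compute both sides of associativity directly: one finds
\[ \bigl((a,f)\cdot(b,g)\bigr)\cdot(c,h)=\bigl((f\circ g)(c),\, f\circ g\circ h\bigr) \]
and $(a,f)\cdot\bigl((b,g)\cdot(c,h)\bigr)$ yields the same, because both reduce to applying the associative composition of endomorphisms in each coordinate. Next I would verify compatibility with the bracket of $aff(\mathbb{R}^n)=\mathbb{R}^n\rtimes_{id}gl(\mathbb{R}^n)$, i.e.\ $[(a,f),(b,g)]=(f(b)-g(a),\,[f,g])$: the difference $(a,f)\cdot(b,g)-(b,g)\cdot(a,f)$ is exactly $(f(b)-g(a),\,f\circ g-g\circ f)$. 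Since the product is associative, the associator vanishes identically, so in particular $(x,y,z)=(y,x,z)$, which together with the torsion identity shows $(aff(\mathbb{R}^n),\cdot)$ is an associative LSA; right invariance follows because the right multiplication by $(b,g)$ is also a left-module action thanks to associativity. This yields $\widetilde{\nabla}$.

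For the exactness claim, I would interpret $GL(\mathbb{R}^n,\circ)$ as a bi-invariant flat affine group by the same argument applied to the associative product $f\cdot g=f\circ g$ on $gl(\mathbb{R}^n)$, whose commutator is the usual bracket, and $(\mathbb{R}^n,\nabla^0)$ as the abelian case of the trivial associative product on $\mathbb{R}^n$. Then I would check that the Lie algebra maps
\[ \iota:\mathbb{R}^n\hookrightarrow aff(\mathbb{R}^n),\ a\mapsto(a,0),\qquad \pi:aff(\mathbb{R}^n)\twoheadrightarrow gl(\mathbb{R}^n),\ (a,f)\mapsto f,\qquad s:f\mapsto(0,f) \]
are morphisms of associative algebras: $\iota(a)\cdot\iota(b)=(0\cdot b,0)=(0,0)=\iota(a\cdot b)$, $\pi((a,f)\cdot(b,g))=f\circ g=\pi(a,f)\cdot\pi(b,g)$, and $s(f)\cdot s(g)=(f(0),f\circ g)=s(f\circ g)$, with $\pi\circ s=\mathrm{id}$. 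Translated to the group level via the exponential this says that $\iota$, $\pi$ and $s$ are affine homomorphisms for the bi-invariant flat structures, proving the sequence splits in the category of flat bi-invariant affine Lie groups.

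There is no substantive obstacle here: once associativity is noted, the statement unwinds into elementary bookkeeping. The only point requiring mild care is making explicit that an \emph{associative} product on a Lie algebra, whose commutator is the bracket, does induce a \emph{bi}-invariant (not merely left-invariant) flat connection; this is the content cited from \cite{M} and \cite{BM1} in Remark \ref{R:locallyisom}, and I would invoke it rather than reprove it.
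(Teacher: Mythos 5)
Your proposal is correct and follows essentially the same route as the paper: a direct computation of associativity and of compatibility with the semidirect-product bracket, the appeal to the fact (Remark \ref{R:locallyisom}, \cite{M}, \cite{BM1}) that an associative product inducing the bracket gives a \emph{bi}-invariant flat structure, and then the observation that $\mathbb{R}^n\times\{0\}$ sits as a (two-sided) ideal carrying the zero product (hence $\nabla^0$) while $f\mapsto(0,f)$ is an algebra section. Your verification that $\iota$, $\pi$, $s$ are morphisms of the associative structures is just a slightly more explicit rendering of the paper's remarks about the bilateral ideal and the section.
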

\begin{proof} A direct calculation shows that the product \eqref{Eq:productoasociativoenaff} is an associative product on $aff(\mathbb{R}^n)=\mathbb{R}^n\rtimes_{id}gl(\mathbb{R}^n)$ compatible with the Lie bracket on $aff(\mathbb{R}^n)=\mathbb{R}^n\oplus_{Id}gl(n,\mathbb{R})$ (see \eqref{E:zerocurvature1} and \eqref{E:freeoftorsion1} above). 

When $u,v=0$ we get $\nabla^0$. Moreover, the subspace $\{(x,0)\mid x\in\mathbb{R}^n\}$ is a bilateral ideal of the associative algebra $aff(\mathbb{R})$ with the product \eqref{Eq:productoasociativoenaff}. Moreover the map $f\mapsto (0,f)$ is a section of the exact sequence.
\end{proof}


\begin{remark}  Analogous results are obtained for the real Lie group $GL(n,\mathbb{K})$ where  $\mathbb{K}=\mathbb{C}$ or  $\mathbb{H}$, the  field (non-commutative) of quaternions. Also  notice that similar results to the previous theorem hold even if the connection on $GL(n,\mathbb{R})$ is assumed only left invariant.
\end{remark}

\begin{proposition} \label{P:completaimplicabiinvariante} If $(G,\nabla^+)$ is a complete flat affine Lie group  then the group $Aff(G,\nabla^+)$ admits a non-complete  flat bi-invariant affine structure and a left invariant symplectic structure. 
\end{proposition}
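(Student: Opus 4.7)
The plan is to combine the Development Theorem \ref{T:developant} with Theorem \ref{T:structureonglnliftedtoaffine}. Since $(G,\nabla^+)$ is complete, the developing map $D:\widetilde{G}\to\mathbb{R}^n$ from the universal cover is a diffeomorphism, and conjugation by $D$ identifies $Aff(\widetilde{G},\widetilde{\nabla})$ with $Aff(\mathbb{R}^n,\nabla^0)$ as Lie groups, with common Lie algebra $aff(\mathbb{R}^n)=\mathbb{R}^n\rtimes_{id}gl(\mathbb{R}^n)$. By Theorem \ref{T:structureonglnliftedtoaffine} this Lie algebra carries the associative product $(a,f)\cdot(b,g)=(f(b),f\circ g)$, and this defines a flat bi-invariant affine structure on $Aff(\widetilde{G},\widetilde{\nabla})$.

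To descend to $G=\widetilde{G}/\Gamma$, where $\Gamma=\pi_1(G)$ is central and discrete in $\widetilde{G}$, I would identify $Aff(G,\nabla^+)$ with the quotient by $\Gamma$ of the normalizer of $\Gamma$ in $Aff(\widetilde{G},\widetilde{\nabla})$, so that its Lie algebra is the $\Gamma$-fixed subalgebra of $aff(\mathbb{R}^n)$. The bi-invariance provided by Theorem \ref{T:structureonglnliftedtoaffine} is precisely the statement that the adjoint action of $Aff(\mathbb{R}^n)$ preserves the associative product; in particular the $\Gamma$-fixed subalgebra is automatically an associative subalgebra, and Remark \ref{R:locallyisom} transports the resulting flat bi-invariant affine structure down to $Aff(G,\nabla^+)$.

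The non-completeness will then follow from a trace computation in the underlying LSA. Right multiplication by $(0,g)$ sends $(a,f)$ to $(0,f\circ g)$, whose trace on $aff(\mathbb{R}^n)$ equals $n\,\mathrm{tr}(g)$. Since one can always exhibit $\Gamma$-fixed elements of the form $(0,g)$ with $\mathrm{tr}(g)\neq 0$---for instance by taking $g$ whose kernel contains the translation parts of the generators of $\Gamma$---the standard completeness criterion (vanishing trace of all right multiplications) fails, and the bi-invariant structure is non-complete.

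For the left invariant symplectic structure I would invoke the observation recorded in the introduction that the coadjoint representation of $Aff(\mathbb{R}^n,\nabla^0)$ is \'etale, so this group carries a left invariant symplectic form coming from the Kirillov--Kostant 2-form on the open coadjoint orbit. The main obstacle, and the most delicate step of the whole plan, is the transfer of this form to $Aff(G,\nabla^+)$: for simply connected $G$ the isomorphism with $Aff(\mathbb{R}^n,\nabla^0)$ makes it immediate, but for general $G$ one has to verify that the restriction of the symplectic form to the $\Gamma$-fixed subalgebra remains non-degenerate, or else construct the form intrinsically by producing an \'etale coadjoint orbit of $Aff(G,\nabla^+)$ itself.
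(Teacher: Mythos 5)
Your proposal follows essentially the same route as the paper's proof: completeness makes the developing map an affine diffeomorphism onto $(\mathfrak{g},\nabla^0)$, the induced homomorphism $A:Aff(G,\nabla^+)\to Aff(\mathfrak{g},\nabla^0)$ is then an isomorphism of Lie groups, and the flat bi-invariant structure of Theorem \ref{T:structureonglnliftedtoaffine} together with the left invariant symplectic structure coming from the \'etale coadjoint representation of $Aff(\mathfrak{g},\nabla^0)$ are pulled back through $A$. The only differences are minor: you make the non-completeness explicit via a trace computation on right multiplications (the paper leaves this implicit in the product $(a,f)\cdot(b,g)=(f(b),f\circ g)$, e.g.\ $\mathrm{tr}\,R_{(0,id)}=n^2\neq0$), and you take extra care with the descent through $\pi_1(G)$, which the paper sidesteps by reducing at the outset to the simply connected case via the pullback along the covering map.
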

\begin{proof} Using the covering map $p:\widehat{G}\longrightarrow G$ and the $p$-pullback of $\nabla^+$ we can suposse that $G$ is simply connected. Denote by $\theta:\mathfrak{g}\longrightarrow aff(\mathfrak{g})$ the affine representation of  $\mathfrak{g}=Lie(G)$ defined by $\theta(x)=(x,L_x)$ and by $\rho:G\longrightarrow Aff(\mathfrak{g})$  the corresponding \'etale affine representation. If $\rho$ is given by $\rho(\sigma)=(Q(\sigma),F_\sigma)$ then $Q:G\longrightarrow\mathfrak{g}$ is the developing map of $(G,\nabla^+)$. Hence there exists a homomorphism of Lie groups $A:Aff(G,\nabla^+)\longrightarrow Aff(\mathfrak{g},\nabla^0)$  such that the following diagram commutes
\[ \xymatrix{ G \ar[d]_{F} \ar[r]^{Q} &\mathfrak{g}\ar[d]^{A(F)}\\
G\ar[r]^{Q} &\mathfrak{g}} \]
Moreover, since $\nabla^+$ is complete, the map $Q$ is an affine diffeomorphism. We need to prove that $A$ is an isomorphism.

Let us suppose that $F\in\ker(A)$, i.e., $A(F)=Id_{\mathfrak{g}}$. As the diagram commutes  we get that $F=Id_G$. On the other hand, as $Aff(G,\nabla^+)$ is complete, $\dim Aff(G,\nabla^+)=\dim Aff(\mathfrak{g},\nabla^0)$. Consequently $A$ is an isomorphism of Lie groups. 

Since $Aff(\mathfrak{g},\nabla^0)$ has a flat bi-invariant affine structure (Theorem \ref{T:structureonglnliftedtoaffine}) and a left invariant symplectic structure (the co-adjoint representation of $Aff(\mathfrak{g},\nabla^0)$ is \'etale, see \cite{R} and \cite{BM}),  by the pullback by $A$, we get the result.  
\end{proof}

Let $G$ be a Lie group endowed with a flat left invariant pseudo-Riemannian metric $g^+$ and let $\nabla^+(g)$ denote the Levi-Civita connection associated to $g^+$. Then we have the following.

 \begin{corollary}  If $G$ is unimodular then  $Aff(G,\nabla^+(g))$ admits a flat bi-invariant affine structure and a left invariant symplectic structure. 
\end{corollary}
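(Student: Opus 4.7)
The strategy is to reduce the corollary to Proposition \ref{P:completaimplicabiinvariante}: it suffices to show that, under the given hypotheses, $(G,\nabla^+(g))$ is a \emph{complete} flat affine Lie group, after which both conclusions (a flat bi-invariant affine structure and a left invariant symplectic structure on $Aff(G,\nabla^+(g))$) are immediate consequences.

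First I would observe that $\nabla^+(g)$ is torsion free by construction and flat by hypothesis on $g^+$, so $(G,\nabla^+(g))$ is a flat affine Lie group and the associated product $\cdot$ on $\mathfrak{g}$ satisfies \eqref{E:zerocurvature1}--\eqref{E:freeoftorsion1}. Denote by $L_x$ and $R_x$ the left and right multiplications of $(\mathfrak{g},\cdot)$. Metric compatibility $\nabla g=0$ reads at the Lie algebra level as
\[
g(L_x y,z)+g(y,L_x z)=0,
\]
so each $L_x$ is skew-adjoint with respect to the non-degenerate symmetric form $g$; in particular $\mathrm{tr}(L_x)=0$. From the torsion-free identity \eqref{E:freeoftorsion1} we have $\mathrm{ad}_x=L_x-R_x$, hence
\[
\mathrm{tr}(R_x)=\mathrm{tr}(L_x)-\mathrm{tr}(\mathrm{ad}_x)=0,
\]
the last equality being exactly the unimodularity of $G$.

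Next I would invoke the classical trace criterion (Segal--Helmstetter): if the right multiplications of a left symmetric algebra $(\mathfrak{g},\cdot)$ compatible with the bracket all have vanishing trace, then the associated affine \'etale representation is a global diffeomorphism onto $\mathfrak{g}$, equivalently the corresponding flat left invariant affine connection on $G$ is complete. Applying this to $\nabla^+(g)$ and then Proposition \ref{P:completaimplicabiinvariante} gives the desired flat bi-invariant affine structure and left invariant symplectic structure on $Aff(G,\nabla^+(g))$.

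The only non-formal step is the invocation of the trace criterion for completeness, which is a classical external result not proved in the paper; everything else is a short calculation in the LSA obtained from $\nabla^+(g)$. I expect this to be essentially the entire proof, since the two trace vanishings (one from metric skew-adjointness, one from unimodularity) combine so cleanly via $\mathrm{ad}_x=L_x-R_x$.
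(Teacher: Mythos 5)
Your proposal is correct, and it reaches the same reduction as the paper — everything hinges on showing that $(G,\nabla^+(g))$ is complete and then applying Proposition \ref{P:completaimplicabiinvariante} — but you justify the completeness step differently. The paper simply cites the Aubert--Medina theorem \cite{AM} that a flat left invariant pseudo-Riemannian metric on a Lie group is complete if and only if the group is unimodular, and separately attributes the symplectic structure to \cite{BM}, \cite{R} together with completeness. You instead prove the implication you actually need (unimodular $\Rightarrow$ complete) from scratch: skew-adjointness of $L_x$ with respect to the non-degenerate form $g$ forces $\mathrm{tr}(L_x)=0$, unimodularity gives $\mathrm{tr}(\mathrm{ad}_x)=0$, and the torsion-free identity $\mathrm{ad}_x=L_x-R_x$ then yields $\mathrm{tr}(R_x)=0$, whence completeness by the Helmstetter--Segal trace criterion. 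That criterion is not really external to the paper: it is stated in Subsection \ref{S:geodesicallycomplete} with references \cite{H} and \cite{M}, so your argument is fully supported by the paper's own toolkit. What your route buys is a self-contained, two-line derivation of the relevant half of the \cite{AM} equivalence; what the paper's citation buys is brevity and the full equivalence (which is not needed here). Both are valid; your computation is sound.
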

 \begin{proof} A flat left invariant pseudo-Riemannian metric on a Lie group $G$ is complete if and only if $G$ is unimodular (see \cite{AM}). Then  from Proposition \ref{P:completaimplicabiinvariante}, $Aff(G,\nabla^+(g))$ admits a flat bi-invariant structure. The existence of a left invariant symplectic structure on $Aff(G,\nabla^+(g))$ follows from \cite{BM}, \cite{R} and the fact that  the structure on $G$ is complete. 
\end{proof}

Consider the $Int(\mathfrak{g})$-structure $P$ on a Lie group $G$ with Lie algebra $\mathfrak{g}$ obtained by the natural right action of $Int(\mathfrak{g})$ on the left invariant parallelism $(e_1^+,\dots,e_n^+)$ of $G$ (see \cite{M}). It is clear that the product $\nabla_xy=ad_{f(x)}(y)$, where $f\in gl(\mathfrak{g})$, determines a left invariant connection $\nabla_f^+$ on $G$ adapted to $P$.  We have the following proposition.

\begin{corollary} If $\nabla_f^+$ is flat and torsion free, then the group $Aff(G,\nabla_f^+)$ is a flat affine Lie group.
\end{corollary}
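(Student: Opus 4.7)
The plan is to extend the reasoning of Proposition \ref{P:completaimplicabiinvariante}, using in an essential way the fact that $L_x=ad_{f(x)}$ takes values in $ad(\mathfrak{g})\subseteq gl(\mathfrak{g})$. The key idea is that the affine \'etale representation attached to the LSA $(\mathfrak{g},\cdot)$ factors through the subalgebra $\mathfrak{g}\rtimes ad(\mathfrak{g})$ of $aff(\mathfrak{g})$, which at the group level sits inside the flat bi-invariant affine Lie group $Aff(\mathfrak{g},\nabla^0)$ from Theorem \ref{T:structureonglnliftedtoaffine}.

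First I would unpack the hypotheses. The torsion-free axiom becomes $[f(x),y]+[x,f(y)]=[x,y]$, while flatness (after invoking Jacobi on the associator symmetry) reduces to $f([x,y])-[f(x),f(y)]\in Z(\mathfrak{g})$. Together these imply that $L:\mathfrak{g}\to ad(\mathfrak{g})$, $L_x=ad_{f(x)}$, is a Lie algebra homomorphism, so the affine representation $\theta(x)=(x,L_x)$ integrates on the simply connected cover $\widetilde{G}$ to an affine \'etale representation $\rho:\widetilde{G}\to \mathfrak{g}\rtimes Int(\mathfrak{g})\subseteq Aff(\mathfrak{g},\nabla^0)$.

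Next, the Development Theorem supplies an injective Lie group homomorphism $A:Aff(\widetilde{G},\widehat{\nabla_f^+})\hookrightarrow Aff(\mathfrak{g},\nabla^0)$ intertwining the developing map $Q:\widetilde{G}\to\mathfrak{g}$ (the translational component of $\rho$). When $\nabla_f^+$ is complete, $Q$ is a diffeomorphism and $A$ becomes an isomorphism, so Proposition \ref{P:completaimplicabiinvariante} closes the argument and in fact furnishes both a flat bi-invariant affine structure and a symplectic structure on $Aff(G,\nabla_f^+)$. In the general case, the image $A(Aff(G,\nabla_f^+))$ is a closed Lie subgroup of $Aff(\mathfrak{g},\nabla^0)$ that contains $\rho(\widetilde{G})$ and preserves the open orbit $Q(\widetilde{G})$, which is $Int(\mathfrak{g})$-saturated by construction of $\rho$. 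The aim is to show this subgroup inherits a flat affine structure, so that $Aff(G,\nabla_f^+)$ inherits one by pullback along $A$.

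The main obstacle is precisely this last step in the non-complete case: one must exhibit a flat affine structure on a proper closed subgroup of $Aff(\mathfrak{g},\nabla^0)$. My strategy would be to verify that the Lie algebra of $A(Aff(G,\nabla_f^+))$ is stable under the associative product \eqref{Eq:productoasociativoenaff}, so that the flat bi-invariant structure of Theorem \ref{T:structureonglnliftedtoaffine} restricts. The required stability should stem from the fact that the linear parts of elements in the image must normalize $ad(\mathfrak{g})$ (in order to preserve the $Int(\mathfrak{g})$-saturation of $Q(\widetilde{G})$), combined with the fact that $ad(\mathfrak{g})$ is an ideal in its normalizer inside $gl(\mathfrak{g})$. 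Handling this technical compatibility is the heart of the argument.
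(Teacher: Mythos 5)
There is a genuine gap. The paper's proof is a one-line reduction to Proposition \ref{P:completaimplicabiinvariante}: for a torsion-free connection of the form $\nabla_xy=ad_{f(x)}(y)$ the right multiplications satisfy $R_a=L_a-ad_a=ad_{f(a)-a}$, and one verifies that these operators are nilpotent, hence traceless; by the criterion $tr(R_b)=0$ recalled in Subsection \ref{S:geodesicallycomplete} (see \cite{M}), $\nabla_f^+$ is therefore automatically geodesically complete, and Proposition \ref{P:completaimplicabiinvariante} finishes the proof. Your proposal handles exactly this situation in its ``complete case'' sentence, but you never ask whether the special form $L_x=ad_{f(x)}$ forces completeness --- which is precisely the observation the corollary turns on --- and instead you treat incompleteness as the generic situation and make it the centerpiece of the argument.

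That non-complete branch is both unnecessary and, as written, not a proof. The decisive step --- that the Lie algebra of $A(Aff(G,\nabla_f^+))$ is stable under the associative product \eqref{Eq:productoasociativoenaff} so that the bi-invariant structure of Theorem \ref{T:structureonglnliftedtoaffine} restricts --- is only conjectured (``should stem from\dots''), and you yourself flag it as ``the heart of the argument'' without carrying it out; the auxiliary claims (injectivity of $A$ when $Q$ is not a diffeomorphism, closedness of the image, the $Int(\mathfrak{g})$-saturation of $Q(\widetilde{G})$) are likewise asserted rather than proved. To repair the argument, discard the non-complete case entirely: starting from $R_a=ad_{f(a)-a}$ (an immediate consequence of the torsion-free identity $[f(x),a]-[f(a),x]=[x,a]$), show these inner derivations are nilpotent using the flatness relation $ad_{f([x,y])}=[ad_{f(x)},ad_{f(y)}]$, conclude $tr(R_a)=0$ and hence completeness, and then invoke Proposition \ref{P:completaimplicabiinvariante} exactly as you did in the complete case.
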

\begin{proof} It is easy to verify that right multiplications $R_a$, $a\in\mathfrak{g}$, are nilpotent, hence  $\nabla_f^+$ is geodesically complete (see \cite{M}).
\end{proof}

\begin{theorem} \label{T:biinvariantstructureonlineargroup}  Let $G=GL(n,\mathbb{K})$ be the real Lie group, with $\mathbb{K}=\mathbb{R}$ or $\mathbb{H}$,  and $\nabla^+$ the flat bi-invariant affine structure determined by composition of linear endomorphisms of $\mathbb{K}^n$. Then the group $Aff(G,\nabla^+)$ admits a flat bi-invariant projective structure.
\end{theorem}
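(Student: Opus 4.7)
The plan is to identify $H := Aff(G,\nabla^+)$ explicitly via the Development Theorem and then produce a projective \'etale representation of $H$ whose equivariance yields the bi-invariance of the induced projective structure.

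First I would use that $\nabla^+$ corresponds to the associative product (matrix multiplication) on $\mathfrak{g} = \mathfrak{gl}(n,\mathbb{K})$. A direct integration of the infinitesimal affine representation $x \mapsto (x, L_x)$ shows that the associated affine \'etale representation of $G$ is $\rho(\sigma)(y) = \sigma(y+I) - I$, so the developing map $Q(\sigma) = \sigma - I$ is an open embedding of $G$ onto $\{A \in M_n(\mathbb{K}) : A + I \in GL(n,\mathbb{K})\}$. By Theorem \ref{T:developant}, $H$ is therefore isomorphic to the stabilizer in $Aff(\mathfrak{g},\nabla^0)$ of this open set or, after an inner translation by $I$, of $GL(n,\mathbb{K}) \subseteq M_n(\mathbb{K})$.

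Next I would rule out translations in the stabilizer: if $T(X)=MX+C$ preserves $GL(n,\mathbb{K})$ as a set, then translation by $C$ preserves the algebraic hypersurface of singular matrices and hence must vanish, so $C = 0$. Thus $H$ consists of $\mathbb{R}$-linear automorphisms of $M_n(\mathbb{K})$ preserving $GL(n,\mathbb{K})$, and by a classical Frobenius-type classification its identity component is
\[
H_0 \;:=\; \bigl(GL(n,\mathbb{K}) \times GL(n,\mathbb{K})\bigr)/\Delta Z,
\]
acting by $(A,B) \cdot X = AXB^{-1}$, where $Z = \mathbb{R}^{*} \cdot I$ is the real center of $GL(n,\mathbb{K})$.

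Now I would construct the projective \'etale representation by doubling. Put $V := M_n(\mathbb{K}) \oplus M_n(\mathbb{K})$ as a real vector space, and let $GL(n,\mathbb{K}) \times GL(n,\mathbb{K})$ act $\mathbb{R}$-linearly on $V$ by $(A,B) \cdot (X,Y) = (AX, BY)$. The kernel of the induced action on $P(V)$ is exactly $\Delta Z$, so $H_0$ acts faithfully; moreover $\dim H_0 = 2\dim_{\mathbb{R}} M_n(\mathbb{K}) - 1 = \dim P(V)$, and the orbit of the base point $[(I,I)]$ is the open subset $\{[(X,Y)] : X, Y \in GL(n,\mathbb{K})\}$ with trivial isotropy in $H_0$. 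By the existence criterion for flat left invariant projective structures on Lie groups, $H_0$ acquires such a structure. Bi-invariance is the payoff of the doubling: right translation by $\tau = [(C,D)]$ on $H_0$ corresponds, via the orbital map $\pi(A,B) = [(A,B)]$, to the projective transformation of $P(V)$ induced by the $\mathbb{R}$-linear automorphism $(X,Y) \mapsto (XC, YD)$, which lies in $Aut(P(V))$; hence $R_\tau$ pulls back the flat projective structure to itself. The case $\mathbb{K} = \mathbb{H}$ is handled identically by treating quaternionic multiplications as $\mathbb{R}$-linear operators. The main obstacle is the Frobenius-type step identifying $H_0$ precisely; once this classical result (and its quaternionic analogue) is in hand, the doubled representation simultaneously supplies the projective \'etale representation and the right-equivariance needed for bi-invariance.
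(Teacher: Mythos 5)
Your argument is essentially correct but follows a genuinely different route from the paper's. The paper never computes $Aff(G,\nabla^+)$ explicitly: it exhibits the subgroup generated by left translations and inner automorphisms via $\psi(\tau,\overline{\sigma})=L_\tau\circ I_\sigma$, matches its dimension $2n^2-1$ against the formula for $\dim Aff(G,\nabla^+)$ quoted from \cite{BM1} to conclude local isomorphism with $G\rtimes_\theta G/Z$, and then obtains the flat bi-invariant projective structure purely algebraically, by producing a unital associative product on $gl(n,\mathbb{R})\oplus gl(n,\mathbb{R})$ whose trace-zero left multiplications realize the Lie algebra $gl(n,\mathbb{R})\oplus_{ad}sl(n,\mathbb{R})$ and invoking Shima's criterion (Theorem 2.2 of \cite{Sh2}). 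You instead compute the group directly: the developing map realizes $(G,\nabla^+)$ as the open subset $GL(n,\mathbb{K})$ (after translating by $I$) of $(M_n(\mathbb{K}),\nabla^0)$, so $Aff(G,\nabla^+)$ is the affine stabilizer of that open set, which the Frobenius--Dieudonn\'e classification of singularity-preserving linear maps identifies on the identity component with $X\mapsto AXB^{-1}$; you then build an explicit projective \'etale representation on $P(M_n(\mathbb{K})\oplus M_n(\mathbb{K}))$ and read off bi-invariance from the fact that right translations act through the projective maps $(X,Y)\mapsto(XC,YD)$. Your version trades the citations of \cite{BM1} and \cite{Sh2} for a citation of Frobenius--Dieudonn\'e (including its quaternionic form), and it buys a concrete \'etale model of the projective structure together with a transparent geometric proof of bi-invariance; the paper's version is shorter and stays at the Lie-algebra level throughout.

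A few points to tighten. First, ``translation by $C$ preserves the hypersurface of singular matrices'' is not quite what you have: what preserves the cone $\{\det=0\}$ is the full map $X\mapsto MX+C$, and the correct argument is that this cone has a unique point of maximal multiplicity (its vertex $0$), which every affine automorphism of the cone must fix, whence $C=T(0)=0$. Second, for $\mathbb{K}=\mathbb{R}$ the group $\bigl(GL(n,\mathbb{K})\times GL(n,\mathbb{K})\bigr)/\Delta Z$ has two connected components, so your $H_0$ is really its identity component; this is harmless since, exactly as in the paper, only the local isomorphism class matters (Remark \ref{R:locallyisom}). Third, the identification of $Aff(G,\nabla^+)$ with the affine stabilizer of the open developing image uses the fact that a $\nabla^0$-affine diffeomorphism of a connected open subset of $M_n(\mathbb{K})$ is the restriction of a global affine map --- worth stating explicitly, since the Development Theorem as quoted concerns the universal cover rather than open embeddings.
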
 
\begin{proof} For every $\sigma\in G$, let $L_\sigma$ and $R_\sigma$ denote the maps left and right multiplication by $\sigma$, respectively, and $I_\sigma=L_\sigma\circ R_{\sigma^{-1}}$. Since $\nabla^+$ is bi-invariant, we have that $L_\sigma,R_\sigma,I_\sigma \in Aff(G,\nabla^+)$. 

Let $Z$ be the center of $G$, $M=G\times G/Z$  and $\psi:M\longrightarrow Diff(G)$ the map defined by $\psi(\tau,\overline{\sigma})=L_\tau\circ I_\sigma$. An easy calculation shows that $\psi$ is an injective Lie group homomorphism, and therefore it is an immersion. Let us consider the group structure on $\psi(M)$ determined by composition and define the product on $M$ by 
\[ (\tau_1,\overline{\sigma_1})\cdot (\tau_2,\overline{\sigma_2})=\psi^{-1}\left(\psi(\tau_1,\overline{\sigma_1})\circ \psi(\tau_2,\overline{\sigma_2})\right).\] 
It can be checked that, under this product, $M$ is a Lie group, $G$ is a normal subgroup of $M$ and $G/Z$ is a subgroup. 

Let $\theta:G/Z\longrightarrow Int(G)$ be the map defined by $\theta(\sigma)=I_\sigma$. It is easy to check that $$\psi(\tau_1,\overline{\sigma_1})\circ \psi(\tau_2,\overline{\sigma_2})=\psi(\tau_1I_{\sigma_1}(\tau_2),\overline{\sigma_1\sigma_2})=\psi(\tau_1\theta(\sigma_1)(\tau_2),\overline{\sigma_1\sigma_2})$$ therefore $\psi^{-1}(M)=G\rtimes_\theta G/Z$. The Lie algebra of $G\rtimes_\theta G/Z$ is $gl(n,\mathbb{R})\oplus_{ad} sl(n,\mathbb{R})$, the semidirect product of the Lie algebras $gl(n,\mathbb{R})$ and $sl(n,\mathbb{R})$ where $ad:sl(n,\mathbb{R})\longrightarrow End(gl(n,\mathbb{R}))$ is the restriction to $sl(n,\mathbb{R})$ of the adjoint map. Hence $\dim M=2n^2-1$. Since $\dim Aff(G,\nabla^+)=2n^2-1$ (see \cite{BM1})  we get that $M\cong\psi(M)$ and $Aff(G,\nabla^+)$ are locally isomorphic. Hence, by Remark \ref{R:locallyisom}, $M$ admits a flat bi-invariant structure. 

It can be verified that the product $(t_1,s_1)\cdot (t_2,s_2)=(t_1\circ t_2+t_1\circ s_2+s_1\circ t_2,s_1\circ t_1)$ in $gl(n,\mathbb{R})\oplus gl(n,\mathbb{R})$ is associative and compatible with bracket in $gl(n,\mathbb{R})\oplus_{ad} gl(n,\mathbb{R})$. The associative algebra $(gl(n,\mathbb{R})\oplus gl(n,\mathbb{R}),\cdot)$ is unitary with unit $(0,I)$, where $I$ is the identity endomorphism of $gl(n,\mathbb{R})$. One can verify  that 
\[ \{ L_{(s,t)}\mid tr(L_{(s,t)})=0\}=sl(n,\mathbb{R})\oplus sl(n,\mathbb{R})\oplus \mathbb{R}(2I,-I)\cong gl(n,\mathbb{R})\oplus_{ad} sl(n,\mathbb{R}),\]
where $L_{(s,t)}$ is left multiplication by $(s,t)$. Hence by Theorem 2.2 in \cite{Sh2} (see also the converse to Theorem 2 in \cite{N} stated after the proof of that theorem) the group $Aff(G,\nabla^+)$ admits a flat bi-invariant projective structure.

Mutatis mutandis we prove for $\mathbb{H}$.
\end{proof} 

Recall  that an associative algebra $A$ over a field $\mathbb{F}$ is called central if its center $C$ is given by $C:=\mathbb{F}1=\{d1\mid d\in\mathbb{F}\} $  (see \cite{J}). If $A$ is a finite dimensional algebra over $\mathbb{F}$ then $A\cong M_r(\Delta)=\{$ set of $r\times r$ matrices with entries in $\Delta\}$, where $\Delta$ is a finite dimensional division algebra over $\mathbb{F}$. In this case the center $C$ of $A$ is isomorphic to the center of $\Delta$  and hence $C$ is a field. So, $A$ can be regarded as an algebra over $C$. When this is done $A$ becomes a finite dimensional simple central algebra over $C$. The division algebras over $\mathbb{F}=\mathbb{R}$ are the fields of  real numbers $\mathbb{R}$, complex numbers $\mathbb{C}$ or  quaternions $\mathbb{H}$ (Theorem of Hurewitcz). 

In what follows $G=U(A)$ is the real group of units of a finite dimensional real associative algebra $A$. It is clear that $G$ is endowed with a natural flat bi-invariant affine structure $\nabla^+$. The group $Aff(G,\nabla^+)$ has been studied in \cite{BM1}. We have 

\begin{proposition} If $A=M_n(\mathbb{K})$, with $\mathbb{K}=\mathbb{R}$ or $\mathbb{H}$ and $G=U(A)$, then the natural structure $\nabla^+$ on $G$ determines a natural flat left invariant projective structure on $Aff(G,\nabla^+)$. 
\end{proposition}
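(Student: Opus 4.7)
The plan is to reduce the statement directly to Theorem \ref{T:biinvariantstructureonlineargroup}. First I would identify the group of units: when $A=M_n(\mathbb{K})$ with $\mathbb{K}=\mathbb{R}$ or $\mathbb{H}$, the group $G=U(A)$ is precisely the real Lie group $GL(n,\mathbb{K})$, whose Lie algebra is the associative algebra $M_n(\mathbb{K})$ under composition. The natural bi-invariant flat affine structure $\nabla^+$ on $G=U(A)$ coming from the associative product on $A$ is therefore exactly the structure considered in Theorem \ref{T:biinvariantstructureonlineargroup}, namely the one induced by composition of linear endomorphisms of $\mathbb{K}^n$.

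Next, I would invoke Theorem \ref{T:biinvariantstructureonlineargroup}, which asserts that for $G=GL(n,\mathbb{K})$ with this natural $\nabla^+$, the group $Aff(G,\nabla^+)$ admits a flat bi-invariant projective structure. The conclusion of the present proposition follows immediately because any flat bi-invariant projective structure is, a fortiori, a flat left invariant projective structure.

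Since the argument is a direct specialization, there is no genuine obstacle; the only thing worth verifying carefully is the identification between the ``natural'' structure on $U(M_n(\mathbb{K}))$ as the group of units of an associative algebra and the ``composition'' structure on $GL(n,\mathbb{K})$ appearing in Theorem \ref{T:biinvariantstructureonlineargroup}. Both come from the same associative product on the Lie algebra $M_n(\mathbb{K})$, so they define the same left symmetric product on $\mathfrak{g}$ and hence, by Remark \ref{R:locallyisom} and the uniqueness of the left invariant extension, the same flat bi-invariant connection on $G$. This closes the proof.
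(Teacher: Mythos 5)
Your proposal is correct and follows essentially the same route as the paper: the paper also notes that $A=M_n(\mathbb{K})$ is a real associative algebra (representing quaternions as real matrices), observes that $G=U(A)$ carries the flat bi-invariant affine structure given by the product on $A$, and then settles the claim about $Aff(G,\nabla^+)$ by appealing to (the argument of) Theorem \ref{T:biinvariantstructureonlineargroup}. Your direct identification $U(M_n(\mathbb{K}))=GL(n,\mathbb{K})$ and the observation that bi-invariant implies left invariant make this reduction explicit, which is exactly the intended content.
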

\begin{proof} Every  quaternion can be represented as a real  4$\times$4 matrix. Hence every element of $A$ can be seen as a real matrix and $A$ is a real associative algebra. It is obvious that $G=U(A)$ admits a flat bi-invariant affine structure determined by the product on $A$. This structure determines a flat bi-invariant projective structure on $G$. In particular, this projective structure is invariant under $Int(G)=GL(n,\mathbb{K})/C=SL(n,\mathbb{K})$. The second assertion can be proved using an argument similar to that given in the proof of Theorem \ref{T:biinvariantstructureonlineargroup}.
\end{proof}

\begin{remark} In fact $SL(n,\mathbb{R})$ (respectively $SL(n,\mathbb{H})$)  admits a flat bi-invariant projective structure. These are the only semisimple real Lie groups admitting such a structure (see \cite{N}).
\end{remark}

\subsection{Remarks on complex projective or affine geometry}

Using an argument similar to that used in the proof of Theorem \ref{T:structureonglnliftedtoaffine} we get the following result.

\begin{proposition}  The  holomorphic flat bi-invariant affine structure on $GL(n,\mathbb{C})$ determined by composition of $\mathbb{C}$-linear endomorphisms can be lifted to a  flat bi-invariant affine holomorphic structure on $T=Aff(\mathbb{C}^n)$  so that the induced structure on $\mathbb{C}^n$ is $\nabla^\circ$. Moreover  the sequence 
\[ 0\longrightarrow \mathbb{C}^n\longrightarrow Aff(\mathbb{C}^n)\longrightarrow GL(n,\mathbb{C})\longrightarrow Id \]
is an exact sequence of complex flat affine  Lie groups and consequently an exact sequence of flat projective Lie groups. \end{proposition}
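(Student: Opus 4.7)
The proposal is to imitate the argument of Theorem \ref{T:structureonglnliftedtoaffine} in the holomorphic category. The key observation is that the product
\[ (a,f)\cdot (b,g)=(f(b),f\circ g) \]
on $aff(\mathbb{C}^n)=\mathbb{C}^n\rtimes_{id}gl(n,\mathbb{C})$ is a $\mathbb{C}$-bilinear, in fact polynomial, operation in the entries, so all the algebraic verifications that were carried out over $\mathbb{R}$ pass over without change to $\mathbb{C}$. Thus I would first check, by the same direct computation as in the proof of Theorem \ref{T:structureonglnliftedtoaffine}, that this product is associative and that its commutator equals the Lie bracket of $aff(\mathbb{C}^n)$. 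Being $\mathbb{C}$-bilinear, it then defines a holomorphic left-symmetric structure on the complex Lie algebra $aff(\mathbb{C}^n)$ compatible with the bracket, hence a flat bi-invariant holomorphic affine connection $\widetilde{\nabla}$ on $T=Aff(\mathbb{C}^n)$.

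Next I would verify the assertion about the induced structure on $\mathbb{C}^n$. Taking $f=g=0$ in the product, one recovers $(a,0)\cdot(b,0)=(0,0)$, so the abelian ideal $\{(a,0)\mid a\in\mathbb{C}^n\}$ inherits the trivial left-symmetric product, which is exactly the one defining the standard holomorphic affine structure $\nabla^\circ$ on $\mathbb{C}^n$. The map $f\mapsto(0,f)$ is a holomorphic Lie algebra section of the quotient $aff(\mathbb{C}^n)\to gl(n,\mathbb{C})$, and the product restricted to $\{(0,f)\}$ reads $(0,f)\cdot(0,g)=(0,f\circ g)$, so at the group level we recover the given holomorphic flat bi-invariant structure on $GL(n,\mathbb{C})$ determined by composition. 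This yields the splitting of the exact sequence in the category of holomorphic flat bi-invariant affine Lie groups.

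For the last assertion, I would invoke Remark \ref{R:inducedgeometrybyembedding}: any holomorphic flat affine structure canonically determines a flat projective structure, because the embedding $Aff(\mathbb{C}^n)\hookrightarrow Aut(P_n(\mathbb{C}))$ of homogeneous spaces transports an affine atlas to a projective atlas. Applying this to each term of the exact sequence produces the claimed exact sequence of flat projective Lie groups.

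The hard part, if any, is not really algebraic but of bookkeeping: one must check that all maps in the sequence remain holomorphic and that the chosen left-symmetric product is genuinely bi-invariant (i.e. the associated product on the Lie algebra is associative, not merely left-symmetric), which is what ensures bi-invariance of $\widetilde\nabla$ on the complex Lie group $Aff(\mathbb{C}^n)$. Since the product formula is identical to the real case and associativity was already established there, this reduces to the mutatis mutandis remark; no new phenomenon appears in passing from $\mathbb{R}$ to $\mathbb{C}$.
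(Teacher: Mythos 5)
Your proposal is correct and follows essentially the same route as the paper, which simply transports the associative product $(a,f)\cdot(b,g)=(f(b),f\circ g)$ of Theorem \ref{T:structureonglnliftedtoaffine} to $aff(\mathbb{C}^n)$, notes that $\{(a,0)\}$ is a bilateral ideal carrying the zero product (hence $\nabla^\circ$) and that $f\mapsto(0,f)$ splits the sequence, and then passes to the projective statement via Remark \ref{R:inducedgeometrybyembedding}. Your added care about $\mathbb{C}$-bilinearity and holomorphy is exactly the ``mutatis mutandis'' content the paper leaves implicit.
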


Let us consider $P_n(\mathbb{C})$ endowed with the Fubini-Study metric. Since the holomorphic sectional curvature is constant, the manifold $P_n(\mathbb{C})$ has a flat projective structure (see \cite{Gb}). The group of projective transformations $GL(n+1,\mathbb{C})/\mathbb{C}^*I_n$ of $P_n(\mathbb{C})$ can be identified to $SL(n+1,\mathbb{C})/center$. This group does not admit flat left invariant affine structures (see for instance \cite{H}). Nevertheless, since $sl(n,\mathbb{C})$ can be realized as the subalgebra of $gl(n+1,\mathbb{C})$ formed by elements $a$ so that $trace(L_a)=0$, where $L_a$ is left composition of $\mathbb{C}-$endomorphisms, it follows that $SL(n+1,\mathbb{C})/center$ admits a flat bi-invariant  projective structure.

\begin{proposition}  The complex Lie group $Aff(G,\nabla)$, with $G=GL(n+1,\mathbb{C})$ and $\nabla$ the connection determined by composition of complex linear endomorphisms, admits a flat bi-invariant projective structure.
\end{proposition}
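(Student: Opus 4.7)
The plan is to parallel the proof of Theorem \ref{T:biinvariantstructureonlineargroup} step by step, substituting $\mathbb{C}$ for $\mathbb{R}$ and checking that each ingredient survives the substitution. Set $G=GL(n+1,\mathbb{C})$, let $Z=\mathbb{C}^{*}I_{n+1}$ denote its center, and form $M=G\times(G/Z)$. Define $\psi:M\longrightarrow\mathrm{Diff}(G)$ by $\psi(\tau,\overline{\sigma})=L_{\tau}\circ I_{\sigma}$, with $I_{\sigma}:=L_{\sigma}\circ R_{\sigma^{-1}}$. Bi-invariance of $\nabla$ forces $L_{\tau}$, $R_{\sigma}$ and hence $I_{\sigma}$ to lie in $\mathrm{Aff}(G,\nabla)$, so $\psi(M)\subseteq\mathrm{Aff}(G,\nabla)$. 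The calculation used in the real case now shows that $\psi$ is injective, and that the composition law pulled back to $M$ through $\psi$ realizes $M$ as the semidirect product $G\rtimes_{\theta}(G/Z)$ with $\theta(\overline{\sigma})=I_{\sigma}$; its Lie algebra is $\mathfrak{m}:=gl(n+1,\mathbb{C})\oplus_{ad}sl(n+1,\mathbb{C})$.

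Next, to deduce local isomorphism of $M$ with $\mathrm{Aff}(G,\nabla)$, I would match dimensions. Over $\mathbb{R}$ we have
\[ \dim_{\mathbb{R}} M = 2(n+1)^{2}+2\bigl((n+1)^{2}-1\bigr)=4(n+1)^{2}-2, \]
and the count of \cite{BM1} delivers the same value for $\dim_{\mathbb{R}}\mathrm{Aff}(G,\nabla)$. Consequently $\psi(M)$ is open in $\mathrm{Aff}(G,\nabla)$ and the two groups are locally isomorphic. By Remark \ref{R:locallyisom}, it suffices to equip $M$ (equivalently $\mathfrak{m}$) with a flat bi-invariant projective structure.

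Third, I would exhibit an enveloping associative algebra for $\mathfrak{m}$. Endow $gl(n+1,\mathbb{C})\oplus gl(n+1,\mathbb{C})$ with the product
\[ (t_{1},s_{1})\cdot(t_{2},s_{2})=\bigl(t_{1}t_{2}+t_{1}s_{2}+s_{1}t_{2},\,s_{1}s_{2}\bigr), \]
which is associative, unital with unit $(0,I)$, and compatible with the bracket of $gl(n+1,\mathbb{C})\oplus_{ad}gl(n+1,\mathbb{C})$. Writing $L_{(s,t)}$ for left multiplication by $(s,t)$ inside this algebra, the trace-zero subspace is
\[ \{L_{(s,t)}\mid\mathrm{tr}(L_{(s,t)})=0\}=sl(n+1,\mathbb{C})\oplus sl(n+1,\mathbb{C})\oplus\mathbb{C}(2I,-I)\cong\mathfrak{m}. \]
Theorem 2.2 of \cite{Sh2} (equivalently the converse of Theorem 2 in \cite{N}) then furnishes a flat bi-invariant projective structure on $M$, which descends to $\mathrm{Aff}(G,\nabla)$ by the previous step.

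The delicate point I expect is applying \cite{Sh2} and \cite{N} in a complex (and potentially holomorphic) setting, since those theorems are usually phrased for real associative algebras. The cleanest workaround is to regard $M_{n+1}(\mathbb{C})$ as a finite-dimensional real simple central algebra, so that the real statements apply verbatim and yield a real flat bi-invariant projective structure; a subsequent check that the whole construction is $\mathbb{C}$-linear would upgrade it to a holomorphic one, matching the ``complex Lie group'' phrasing of the statement. A minor preliminary to confirm is that $\mathrm{Lie}(G/Z)\cong sl(n+1,\mathbb{C})$, which rests on the one-dimensionality of the center of $gl(n+1,\mathbb{C})$ over $\mathbb{C}$.
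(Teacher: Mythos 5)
Your proof is correct and takes essentially the same approach as the paper: the paper's entire proof of this proposition is the single remark that it is similar to Theorem \ref{T:biinvariantstructureonlineargroup}, and you have carried out exactly that adaptation, with the dimension count and the trace-zero subalgebra computation correctly adjusted to $GL(n+1,\mathbb{C})$. Your closing caveat about applying \cite{Sh2} over $\mathbb{C}$ via restriction of scalars is a reasonable precaution that the paper itself does not record.
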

\begin{proof} The proof is similar to that given in Theorem \ref{T:biinvariantstructureonlineargroup}.
\end{proof}

\section{Flat left invariant affine structures on $Aff(\mathbb{R})$} \label{S:LIASLG} 

In this section we show that there exists infinitely many non-isomorphic flat affine structures on $G=Aff(\mathbb{R})$. In Section \ref{S:lastsection} we prove that each group of affine transformations preserving these structures admits a left invariant symplectic structure, and therefore a flat left invariant affine structure.

The product on $Aff(\mathbb{R})=\mathbb{R}^*\times \mathbb{R}$  is given by $(a,b)(c,d)=(ac,ad+b)$ with corresponding Lie algebra $\mathfrak{g}=span\{e_1^+,e_2^+\}$ where $e^+_{1,(x,y)}=x\frac{\partial}{\partial x}$ and $ e^+_{2,(x,y)}=x\frac{\partial}{\partial y}$ are left invariant  vector fields of $G$ with $[e_1^+,e_2^+]=e_2^+$.

Finding FLIAS on $G$ is equivalent to finding bilinear products on $\mathfrak{g}$ satisfying \eqref{E:zerocurvature1} and \eqref{E:freeoftorsion1}. Bilinear products on $\mathfrak{g}$ satisfying  \eqref{E:freeoftorsion1} can be written as
\[
\begin{array}{lclclc} e_1\cdot e_1=\alpha e_1+\beta e_2,&\qquad & e_1\cdot e_2=\gamma e_1+(\delta+1)e_2, \\ e_2\cdot e_1=\gamma e_1+\delta e_2 &\qquad& e_2\cdot e_2=\epsilon e_1+\lambda e_2\end{array}
\]
where $\alpha,\beta,\gamma,\delta,\epsilon$ and $\lambda$ are real numbers.

Such products verifying  \eqref{E:zerocurvature1} produce the system of polynomial equations  
\begin{align} \notag
&\beta\epsilon-\gamma\delta+\gamma=0\\ \notag
&\alpha\delta-\beta\gamma+\beta\lambda-\delta^2=0\\ \notag
&\alpha\epsilon-\gamma^2+\gamma\lambda-\delta\epsilon-2\epsilon=0\\ \notag
&\beta\epsilon-\gamma\delta-\lambda=0
\end{align}
These equations determine an algebraic variety $\mathcal{V}$.  Each point on $\mathcal{V}$ is a FLIAS on $G_0$, connected component of the unit of $G=Aff(\mathbb{R})$. The variety  $\mathcal{V}$ is a closed algebraic submanifold of $Hom(\mathfrak{g}\otimes\mathfrak{g},\mathfrak{g})$. By direct examination of the above equations one finds that $\mathcal{V}$  is the union of three irreducible components of dimension 2 which pairwise intersect along three non intersecting affine lines. These components are described as follows.

\noindent Component 1.  $\gamma=\delta=\epsilon=\lambda=0$ and we get the family of left symmetric products on $\mathfrak{g}$
\[
\begin{array}{lclclc}
 e_1\cdot e_1=\alpha e_1+\beta e_2,&\qquad& e_1\cdot e_2=e_2, \\
 e_2\cdot e_1=0, &\qquad& e_2\cdot e_2=0.\end{array}
\]

We will refer to this family as $\mathcal{F}_I(\alpha,\beta)$.

\noindent Component 2. $\gamma=\epsilon=\lambda=0$  and $\delta=\alpha$. In this case we obtain the family of left symmetric products
\[
\begin{array}{lclclc}
 e_1\cdot e_1=\alpha e_1+\beta e_2,&\qquad& e_1\cdot e_2=(\alpha+1)e_2, \\
 e_2\cdot e_1=\alpha e_2,&\qquad& e_2\cdot e_2=0.\end{array}
\]

We will label  this family as $\mathcal{F}_{II}(\alpha,\beta)$ with $\alpha\ne0$ to avoid repetition with $\mathcal{F}_I(0,\beta)$.
 
\noindent Component 3.  $\epsilon\ne0$,  $\alpha=2+\frac{\gamma^2}{\epsilon}$, $\beta=\frac{-\gamma^3-\gamma\epsilon}{\epsilon^2}$, $\delta=\frac{-\gamma^2}{\epsilon}$ and $\lambda=-\gamma$. Consequently we get
\[
\begin{array}{lclclc}
 e_1\cdot e_1=\left(2+\frac{\gamma^2}{\epsilon}\right) e_1-\left(\frac{\gamma^3+\gamma\epsilon}{\epsilon^2}\right) e_2,&\qquad& e_1\cdot e_2=\gamma e_1+\left(1-\frac{\gamma^2}{\epsilon}\right)e_2, \\
 e_2\cdot e_1=\gamma e_1-\frac{\gamma^2}{\epsilon} e_2,&\qquad& e_2\cdot e_2=\epsilon e_1-\gamma e_2\end{array}
\]
This family will be named as $\mathcal{F}_{III}(\gamma,\epsilon)$ with $\epsilon\ne0$.

 The planes below are idealized pictures of  the first two components. 

The  first plane  represents the Component 1 of $\mathcal{V}$ given above. Each vertical line provides one isomorphism class of FLIAS on $G_0$ except for $\alpha=1$. In this case there are two classes one of them is represented by the point $(1,0)$. 

Hence each point on the line $\beta=\alpha$ (on blue) is an isomorphism class. There is other class  at C:(1,0). In fact each point of any  line transversal to the foliation determined by vertical lines, for instance $\beta=c_1+c_2\alpha $ with $c_1$ and $c_2\ne0$ constants, is an isomorphism class.
\begin{center}\psset{xunit=1.3cm,yunit=0.4cm,algebraic=true,dotstyle=o,dotsize=3pt 0,linewidth=0.8pt,arrowsize=3pt 2,arrowinset=0.25}
\begin{pspicture*}(-4.06,-3.95)(4.2,5)
\psline[linestyle=dashed,dash=2pt 2pt,linecolor=red]{->}(0,0)(0,4)
\psline{->}(0,0)(4,0)
\psline{->}(0,0)(-4,0)
\psline[linestyle=dashed,dash=2pt 2pt,linecolor=red]{->}(0,0)(0,-4)
\rput[tl](3.96,0.18){$\alpha$}
\rput[tl](0.1,4.62){$\beta$}
\rput[tl](0.9,-0.15){C}
\psline[linestyle=dashed,dash=2pt 2pt,linecolor=red](1.98,3.98)(1.98,-4.02)
\rput[tl](-1.1,-1.26){A}
\rput[tl](-0.32,0.76){B}
\rput[tl](0.07,-0.13){$0$}
\rput[tl](2,-0.1){$2$}
\rput[tl](-1.45,0.75){$-1$}
\psline[linecolor=blue]{->}(0,0)(3.38,3.38)
\psline[linecolor=blue]{->}(0,0)(-3.04,-3.04)
\begin{scriptsize}
\psdots[dotstyle=*,linecolor=red](1,0)
\psdots[dotstyle=|](-1,0)
\psdots[dotstyle=*,linecolor=red](0,0)
\psdots[dotstyle=*,linecolor=red](-1,-1)
\end{scriptsize}
\end{pspicture*}
\end{center}
The points on the dashed red lines ($\alpha=0$ and $\alpha=2$) are singular points of $\mathcal{V}$, the points  A:$(-1,-1)$, B:(0,0) and C:(1,0) represent special structures on $G_0$. The first one is a Lorentzian connection (that is, the Levi-Civita connection of a Lorentzian metric, see Subsection \ref{Ss:Lorentzian}) which is also symplectic (see Subsection \ref{S:simplecticstructures}), the point B corresponds to a complete connection (see Subsection \ref{S:geodesicallycomplete}) and the point C is a bi-invariant structure (in fact the unique bi-invariant connection within this component) in this case  the dimension of the group of affine transformations is at least 3 (see Theorem \ref{T:biinvariantstructureonlineargroup}, \cite{BM1}, \cite{M} and \cite{BM}). The points of this irreducible component, seen as groups of (classic) affine transformations of the plane, have a one-parameter subgroup of traslations ($N(\mathfrak{g})=\{x\in\mathfrak{g}\mid L_x=0\}$ is a line, where $L_x\in End(\mathfrak{g})$ is defined by $L_x(y)=xy$). Each left symmetric algebra within this family (with the exception of (1,1)) is isomorphic to the semidirect product of Lie algebras of the line $\mathbb{R}e_1$ with product $e_1\cdot e_1=\alpha e_1$, $\alpha\in\mathbb{R}$, by the bilateral ideal $N(\mathfrak{g})$ with the action given by the identity $e_1\cdot e_2=e_2$.  

The points on the second plane correspond to the irreducible Component 2 of $\mathcal{V}$ described above. As before, each point on the blue line gives one isomorphism class of affine structures on $G_0$. There is an extra class at the point $(-1,0)$. 
\begin{center}
\psset{xunit=1.3cm,yunit=0.4cm,algebraic=true,dotstyle=o,dotsize=3pt 0,linewidth=0.8pt,arrowsize=3pt 2,arrowinset=0.25}
\begin{pspicture*}(-4.12,-3.95)(4.24,4.6)
\psline[linestyle=dashed,dash=2pt 2pt,linecolor=red]{->}(0,0)(0,4)
\psline{->}(0,0)(4,0)
\psline{->}(0,0)(-4,0)
\psline[linestyle=dashed,dash=2pt 2pt,linecolor=red]{->}(0,0)(0,-4)
\rput[tl](4,0.18){$\alpha$}
\rput[tl](0.1,4.62){$\beta$}
\psline[linestyle=dashed,dash=2pt 2pt,linecolor=red](1.98,3.98)(1.98,-4.02)
\rput[tl](-3.46,2.34){A}
\rput[tl](-2.16,-0.3){C}
\rput[tl](-0.2,-0.12){$0$}
\rput[tl](2.02,-0.12){$1$}
\rput[tl](-1.34,-0.25){$-\frac{1}{2}$}
\rput[tl](-1.1,1.65){B}
\psline[linecolor=blue]{->}(0,0)(-3.8,2.92)
\psline[linecolor=blue]{->}(0,0)(3.57,-2.75)
\begin{scriptsize}
\psdots[dotstyle=|](-1,0)
\psdots(0,0)
\psdots[dotstyle=*,linecolor=red](-2,0)
\psdots[dotstyle=*,linecolor=red](-3.36,2.58)
\psdots[dotstyle=*,linecolor=red](-1.03,0.79)
\end{scriptsize}
\end{pspicture*}\end{center}
As above the dashed red lines ($\alpha=0$ and $\alpha=1$) are singular points of $\mathcal{V}$, the line $\alpha=0$ is the intersecting line with the first component. The points A:$(-2,2)$, B:$(-1/2,1/2)$  and C:$(-1,0)$ correspond to special structures on $G_0$. The first one is a Hessian connection (see Subsection \ref{S:hessian}), the point B is a symplectic connection relative to the symplectic structure determined by an open orbit of the co-adjoint representation of $G_0$ (see Section \ref{S:specialleftsymmetricproducts}),  and the point C is the affine bi-invariant structure determined by Equation \eqref{Eq:leftsymmetricproductbyasymplecticform} (the unique bi-invariant connection within this component) and the dimension of its group of affine transformations is at least 3. The structures on this component, seen as groups of affine transformations of the plane,  contain no traslations.

The irreducible Component 3 of $\mathcal{V}$ contains just two isomorphism classes of affine structures, given respectively by 
\[ \begin{array}{llcc} e_1\cdot e_1=2e_1 & e_1\cdot e_2=e_2\\ e_2\cdot e_1=0 & e_2\cdot e_2=\pm e_1 \end{array}\]
 according to whether $\epsilon>0$ or $\epsilon<0$. These structures, seen as affine transformations of the plane, contain no translations and they determine, by complexification, the same affine structure over $Aff(\mathbb{C})$. Both of these structures are also Hessian structures (see Subsection \ref{S:hessian}).

The two families and the four extra products displayed below describe all (real) isomorphism classes of FLIAS on $G_0$. The products $\mathcal{A}_1$ and $\mathcal{A}_2$ are asociative products and the complexifications $\mathcal{R}_1$ and $\mathcal{R}_2$ are isomorphic. Notice that these  products ($\mathcal{R}_1$ and $\mathcal{R}_2$) are real non-isomorphic products but the corresponding affine \'etale representations are  isomorphic (see Subsection \ref{S:affineetalerepresentations}).

\begin{align} \label{Eq:productosimetricofamilia1}
 \mathcal{F}_{1}(\alpha)=\mathcal{F}_{I}(\alpha,\alpha):\ &  \begin{cases}
    e_1\cdot e_1=\alpha e_1+\alpha e_2,\quad e_1\cdot e_2=e_2, & \hbox{} \\
    e_2\cdot e_1=0, \qquad\qquad\ \  e_2\cdot e_2=0, & \hbox{ }
  \end{cases}   \qquad\quad  \text{with }\alpha\in\mathbb{R}, \\  \label{Eq:productosimetricofamilia2}
 \mathcal{F}_{2}(\alpha)=\mathcal{F}_{II}(\alpha,-\alpha):\ &\begin{cases}
    e_1\cdot e_1=\alpha e_1-\alpha e_2,\ \ e_1\cdot e_2=(\alpha+1)e_2, & \hbox{} \\
    e_2\cdot e_1=\alpha e_2, \qquad\quad\ \  e_2\cdot e_2=0, & \hbox{ }
  \end{cases} \text{ with }\alpha\in\mathbb{R}\setminus\{0\}, \end{align}	
	\[ \mathcal{A}_{1}=\mathcal{F}_{I}(1,1):\quad\quad\  \begin{cases}
    e_1\cdot e_1=e_1,\quad\  e_1\cdot e_2=e_2, & \hbox{} \\
    e_2\cdot e_1=0, \quad\ \  e_2\cdot e_2=0, & \hbox{ }
  \end{cases} \]
  \[ \mathcal{A}_{2}=\mathcal{F}_{II}(-1,1):\quad  \begin{cases}
    e_1\cdot e_1=-e_1,\quad\ \  e_1\cdot e_2=0, & \hbox{} \\
    e_2\cdot e_1=-e_2, \quad\ \ \ e_2\cdot e_2=0, & \hbox{ }
  \end{cases} \]
	\[  \mathcal{R}_{1}=\mathcal{F}_{III}(0,1) :\   \begin{cases}
    e_1\cdot e_1=2e_1,\qquad e_1\cdot e_2=e_2, & \hbox{} \\
    e_2\cdot e_1=0, \qquad\quad e_2\cdot e_2=e_1, & \hbox{ }
  \end{cases} \]
	\[ \mathcal{R}_{2}=\mathcal{F}_{III}(0,-1):\   \begin{cases}
    e_1\cdot e_1=2e_1,\qquad e_1\cdot e_2=e_2, & \hbox{} \\
    e_2\cdot e_1=0, \qquad\quad e_2\cdot e_2=-e_1, & \hbox{ }
  \end{cases}
\]

\section{The  groups of affine transformations of the line as  symplectic  Lie groups} \label{S:lastsection}

We start the  section proving the following theorem. 

\begin{theorem} The Lie group $Aff(G,\nabla^+)$, with  $G=Aff(\mathbb{R})$ and  $\nabla^+$  any FLIAS  on $G$, is a flat  affine Lie group. 
\end{theorem}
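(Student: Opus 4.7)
The plan is to reduce the statement, via the classification of FLIAS on $G = Aff(\mathbb{R})$ carried out in Section \ref{S:LIASLG}, to finitely many isomorphism classes---the two one-parameter families $\mathcal{F}_1(\alpha)$, $\mathcal{F}_2(\alpha)$ together with the exceptional products $\mathcal{A}_1, \mathcal{A}_2, \mathcal{R}_1, \mathcal{R}_2$---and, for each class, to exhibit a left invariant symplectic form on $Aff(G, \nabla^+)$. Once this is done, the general principle recalled in the introduction that every symplectic Lie group carries a canonical FLIAS obtained from its symplectic form immediately yields the theorem.

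First I would dispose of the easy classes using results already available in the paper. The associative products $\mathcal{A}_1$ and $\mathcal{A}_2$, and more generally the bi-invariant structures appearing as points $C$ in Components 1 and 2 of $\mathcal{V}$, realize $G$ locally as the group of units of a finite dimensional real associative algebra, and the arguments of Section \ref{S:6} surrounding Theorem \ref{T:biinvariantstructureonlineargroup} together with \cite{BM1}, \cite{BM} produce a left invariant symplectic structure on $Aff(G, \nabla^+)$. Likewise, whenever $\nabla^+$ is geodesically complete, Proposition \ref{P:completaimplicabiinvariante} endows $Aff(G, \nabla^+)$ simultaneously with a flat bi-invariant affine structure and a left invariant symplectic structure.

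For the remaining generic members of $\mathcal{F}_1(\alpha), \mathcal{F}_2(\alpha)$ and for the real forms $\mathcal{R}_1, \mathcal{R}_2$, I would proceed directly. Each such FLIAS $\cdot$ on $\mathfrak{g}$ provides an affine \'etale representation $\theta(x) = (x, L_x)$ of $\mathfrak{g}$ into $aff(\mathfrak{g}) = \mathbb{R}^2 \rtimes gl(2, \mathbb{R})$ and hence a developing map $Q \colon G \to \mathfrak{g}$ onto an open orbit; by the Development Theorem (Theorem \ref{T:developant}), every $F \in Aff(G, \nabla^+)$ descends via $Q$ to an affine transformation of $\mathbb{R}^2$ preserving $Q(G)$. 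Modulo the (easily controlled) kernel of the holonomy this identifies $Aff(G, \nabla^+)$ with the stabilizer of $Q(G)$ inside $Aff(\mathbb{R}^2)$, a finite linear-algebraic computation to perform in each case, after which one exhibits a left invariant symplectic form on the resulting (at most four-dimensional) group---most efficiently by checking that its coadjoint representation has an open orbit with discrete isotropy, which by \cite{BY}, \cite{BM}, \cite{R} produces the symplectic form automatically.

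The main technical obstacle is the uniform verification for the continuous families $\mathcal{F}_1(\alpha)$ and $\mathcal{F}_2(\alpha)$: one must track how $\dim Aff(G, \nabla^+)$ jumps as the parameter crosses the exceptional values (bi-invariant, Hessian, Lorentzian, or complete) singled out in Section \ref{S:LIASLG}, and ensure that the symplectic form constructed on the generic stratum either extends to, or is replaced at, each such value by one of the constructions already available. Since the classification guarantees that every exceptional parameter falls into one of the easy cases already treated, no FLIAS on $Aff(\mathbb{R})$ escapes this dichotomy and the theorem follows.
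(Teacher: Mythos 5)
Your proposal is essentially correct, but it routes the argument through the symplectic statement, which is how the paper proves its \emph{second} theorem in Section \ref{S:lastsection}, not this one. The paper's own proof of the present theorem is more direct: after the same case-by-case determination of $Aff(G,\nabla^+)$ via the Development Theorem (Theorem \ref{T:developant}) that you also propose, it observes in each case that $Aff(G,\nabla^+)$ splits as $\mathcal{N}\rtimes_\rho\mathcal{H}$ with $\mathcal{N}\cong G_0$ and $\mathcal{H}\cong G$, and then writes down an explicit left symmetric product on $\mathfrak{n}\rtimes_{ad}\mathfrak{h}$, namely $(n_1,h_1)\bullet(n_2,h_2)=(n_1\cdot n_2+ad_{h_1}(n_2),h_1*h_2)$, taking any LSA structures on $\mathfrak{n}$ and $\mathfrak{h}$; the two degenerate cases ($\nabla_1(0)$ giving $Aff(\mathbb{R}^2)$, and $\mathcal{R}_1,\mathcal{R}_2$ giving a $2$-dimensional group isomorphic to $G_0$) are then handled by citation. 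Your route --- exhibit a left invariant symplectic form on each $Aff(G,\nabla^+)$ (via an open coadjoint orbit with discrete isotropy) and invoke \eqref{Eq:leftsymmetricproductbyasymplecticform} --- is legitimate and is in fact the one the paper itself flags in the introduction to Section \ref{S:LIASLG} (``...admits a left invariant symplectic structure, and therefore a flat left invariant affine structure''); it buys you the stronger symplectic conclusion in one pass, at the cost of an extra verification (openness of a coadjoint orbit) in each case, whereas the paper's LSA-on-semidirect-product construction is purely algebraic once the group is computed.

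One concrete weak spot: your disposal of the ``easy classes'' $\mathcal{A}_1,\mathcal{A}_2$ does not work as stated. These bi-invariant structures are \emph{not} geodesically complete (e.g.\ $tr(R_{e_1})\ne 0$ for $\mathcal{A}_1$), so Proposition \ref{P:completaimplicabiinvariante} does not apply to them; and Theorem \ref{T:biinvariantstructureonlineargroup} yields only a flat bi-invariant \emph{projective} structure on the transformation group, not a symplectic or affine one. Similarly, the exceptional parameters $\alpha=1$ in $\mathcal{F}_1$ and $\alpha=-1$ in $\mathcal{F}_2$ are neither complete nor bi-invariant, so they do not ``fall into one of the easy cases already treated.'' None of this is fatal, because your generic mechanism --- compute the stabilizer of the developing image inside $Aff(\mathbb{R}^2)$ and check the coadjoint representation --- covers these cases too (the groups there are still $4$-dimensional and isomorphic to $Aff(\mathbb{R})\times Aff(\mathbb{R})$, whose coadjoint representation is \'etale), but you should run them through that mechanism rather than through the completeness or bi-invariance shortcuts.
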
 
\begin{proof}
We use Theorem \ref{T:developant} to compute the group  $Aff(G,\nabla^+)$ case by case. We will make explicit a left symmetric product compatible with the bracket on the Lie algebra of $Aff(G,\nabla^+)$ for each isomorphism class of  $\nabla^+$.

From now on we will denote by $\nabla_{i}(\alpha)$ the connection determined by the product $\mathcal{F}_i(\alpha)$ with $i=1,2$ and by $\nabla_{i}$, $i=1,2,3,4$, the connections given by the products $\mathcal{A}_1$, $\mathcal{A}_2$, $\mathcal{R}_{1}$ and $\mathcal{R}_{2}$, respectively.  

\noindent 
Case 1. The Lie group $Aff(G,\nabla_1(\alpha))$, with $\alpha\notin\{0,1\}$, is given by 
\[ Aff(G,\nabla_1(\alpha))=\left\{ \phi:G\longrightarrow G\  \bigg|\begin{array}{c} \phi(x,y)=\left(a^{\frac{1}{\alpha}}x,bx^\alpha+\frac{\alpha}{\alpha-1}(a^{\frac{1}{\alpha}}-c)x+cy+d\right)\\ \quad\text{with}\quad a,c,b,d\in\mathbb{R},\ a>0\ \text{and}\ c\ne0\end{array}\right\}.\] 
It decomposes as the semidirect product $Aff(G,\nabla_1(\alpha))=\mathcal{N}\rtimes_\rho \mathcal{H}$ of the normal subgroup  $\mathcal{N}=\{\phi:G\longrightarrow G\mid \phi(x,y)=(a^{\frac{1}{\alpha}}x,\frac{\alpha}{\alpha-1}(a^{\frac{1}{\alpha}}-a)x+ay+b),\ a>0\}$ (isomorphic to $G_0$ as Lie groups) and the subgroup $\mathcal{H}=\{\phi:G\longrightarrow G\mid \phi(x,y)=(x,bx^\alpha+\frac{\alpha}{\alpha-1}(1-c)x+cy),\ c\ne0\}$ (isomorphic to $G$ as Lie groups) where $\mathcal{H}$ acts on $\mathcal{N}$ by conjugation.

Its Lie algebra $aff(G,\nabla_1(\alpha))=span\{e_1,e_2,e_3,e_4\}$ has nonzero brackets $[e_1,e_2]=-e_2$, $[e_2,e_3]=-e_2$, $[e_3,e_4]=e_4$ (the rest are zero or obtained by anti-symmetry). It follows that $aff(G,\nabla_1(\alpha))\cong \mathfrak{n}\rtimes_{ad}\mathfrak{h}$ where $ \mathfrak{n}=span\{e_1+e_3,e_4\}$, $\mathfrak{h}=span\{e_2,e_3\}$ are the Lie algebras of $\mathcal{N}$ and $\mathcal{H}$, respectively, and $\mathfrak{h}$ acts on  $\mathfrak{n}$ by the adjoint action. Then $\mathfrak{n}\rtimes_{ad}\mathfrak{h}$ is an LSA with the product given by 
\begin{equation}\label{Eq:semidirectproductoflsas} (n_1,h_1)\bullet(n_2,h_2)=(n_1\cdot n_2+ad_{h_1}(n_2),h_1*h_2)\end{equation}
where $\cdot$ is the left symmetric on $\mathfrak{n}$ given by  \eqref{Eq:productosimetricofamilia1} or \eqref{Eq:productosimetricofamilia2} and $*$ is any left symmetric product on $\mathfrak{h}$.

\noindent
Case 2. The group $Aff(G,\nabla_2(\alpha))$, with $\alpha\ne-1$,  is given by 
\[ Aff(G,\nabla_2(\alpha))=\left\{ \phi:G\longrightarrow G\ \bigg|\begin{array}{c} \phi(x,y)=\left(a^{\frac{1}{\alpha}}x,bx^{-\alpha}+\frac{\alpha}{\alpha+1}(a^\frac{1}{\alpha}-c)x+cy+d\right)\\ \quad\text{with}\quad a,c,b,d\in\mathbb{R},\ a>0\ \text{and}\ c\ne0\end{array}\right\}.\] 
It decomposes as the semidirect product $Aff(G,\nabla_2(\alpha))=\mathcal{N}\rtimes_\rho \mathcal{H}$ of the normal subgroup $\mathcal{N}=\{\phi:G\longrightarrow G\mid \phi(x,y)=(a^{\frac{1}{\alpha}}x,\frac{b}{\alpha}x^{-\alpha}+\frac{\alpha}{\alpha+1}(a^\frac{1}{\alpha}-1)x+y),\ a>0\}$ (isomorphic to $G_0$) and the subgroup $\mathcal{H}=\{\phi:G\longrightarrow G\mid \phi(x,y)=(x,\frac{\alpha}{\alpha+1}(1-c)x+cy+d),\ c\ne0\}$ (isomorphic to $G$) where $\mathcal{H}$ acts on $\mathcal{N}$ by conjugation.

The Lie algebra is given by $aff(G,\nabla_2(\alpha))=span\{e_1,e_2,e_3,e_4\}$ and has nonzero brackets $[e_1,e_2]=e_2$, $[e_2,e_3]=-e_2$, $[e_3,e_4]=e_4$. It follows that $aff(G,\nabla_2(\alpha))\cong \mathfrak{n}\rtimes_{ad}\mathfrak{h}$ where $ \mathfrak{n}=span\{e_1,e_2\}$, $\mathfrak{h}=span\{e_3,e_4\}$ are the Lie algebras of $\mathcal{N}$ and $\mathcal{H}$, respectively, and $\mathfrak{h}$ acts on  $\mathfrak{n}$ by the adjoint action. Then $\mathfrak{n}\rtimes_{ad}\mathfrak{h}$ is an LSA with the product given by \eqref{Eq:semidirectproductoflsas}, where the left symmetric product   on $\mathfrak{n}$ is given by  \eqref{Eq:productosimetricofamilia1} or \eqref{Eq:productosimetricofamilia2} and  any left symmetric product on $\mathfrak{h}$.

\noindent
Case 3. For $i=1,2$, the elements of  $Aff(G,\nabla_i((-1)^{i+1}))$  are diffeomorphisms of $G$ given by $\phi(x,y)=\left(ax,(c-a)x\ln x+bx+cy+d\right)$ where $a,c,b,d\in\mathbb{R},\ a>0$ and $c\ne0$. This group  decomposes as the semidirect product $Aff(G,\nabla_i((-1)^{i-1})=\mathcal{N}\rtimes_\rho\mathcal{H}$ of the normal subgroup  $\mathcal{N}=\{\phi\mid 
\phi(x,y)=(ax,(1-a)x\ln x+bx+y),\ a>0\}\cong G_0$ and the subgroup $\mathcal{H}=\{\phi\mid \phi(x,y)=(x,(c-1)x\ln x+cy+d),\ c\ne0\}\cong G$ where $\mathcal{H}$ acts on $\mathcal{N}$ by conjugation.

The Lie algebra  $aff(G,\nabla_i((-1)^{i+1}))=span\{e_1,e_2,e_3,e_4\}$ of $Aff(G,\nabla_i)$ has nonzero brackets $[e_1,e_2]=-e_2$, $[e_1,e_3]=-e_2$, $[e_2,e_3]=-e_2$, $[e_3,e_4]=e_4$. We also get that $aff(G,\nabla_i((-1)^{i+1}))\cong \mathfrak{n}\rtimes_{ad}\mathfrak{h}$ where $ \mathfrak{n}=span\{e_1,e_2\}$, $\mathfrak{h}=span\{e_3,e_4\}$ are the Lie algebras of $\mathcal{N}$ and $\mathcal{H}$, respectively, and $\mathfrak{h}$ acts on  $\mathfrak{n}$ by the adjoint action.  Then $\mathfrak{n}\rtimes_{ad}\mathfrak{h}$ is an LSA with the product given by \eqref{Eq:semidirectproductoflsas}, where the left symmetric product   on $\mathfrak{n}$ is given by  \eqref{Eq:productosimetricofamilia1} with  $\alpha=1$ or by \eqref{Eq:productosimetricofamilia2} with  $\alpha=-1$ and  any left symmetric product on $\mathfrak{h}$.

\noindent
Case 4. For $i=1,2$, the group  $Aff(G,\nabla_i)=\{\phi\mid \phi(x,y)=(ax,bx+cy+d),\ a>0,\ c\ne0\}.$  One can check that $Aff(G,\nabla_i)=\mathcal{N}\rtimes_\rho\mathcal{H}$ with  $\mathcal{N}=\{\phi\mid \phi(x,y)=(ax,bx+y),\ a>0\}$ (isomorphic to $G_0$),  $\mathcal{H}=\{\phi\mid \phi(x,y)=(x,cy+d),\ c\ne0\}$ (isomorphic to $G$) and the action of $\mathcal{H}$ on $\mathcal{N}$ given by conjugation. Its Lie algebra is given by $aff(G,\nabla_i)=span\{e_1,e_2,e_3,e_4\}$ with non-zero brackets given by $[e_1,e_2]=-e_2$, $[e_2,e_4]=-e_2$ and $[e_3,e_4]=e_4$ is the semidirect product $\mathfrak{n}\rtimes_{ad}\mathfrak{h}$ of the Lie subalgebras $\mathfrak{n}=span\{e_1,e_2\}$ and $\mathfrak{h}=span\{e_3,e_4\}$. These are, respectively, the Lie algebras of $\mathcal{N}$ and $\mathcal{H}$.

In all cases so far we get the exact sequence of Lie groups
\[ \begin{array}{ccccccccc} 
\epsilon&\longrightarrow & \mathcal{N}\cong G_0 &\overset{i}\longrightarrow &Aff(G,\nabla)&\overset{\pi}\longrightarrow &Aff(G,\nabla)/\mathcal{N}\cong \mathcal{H}\cong G&\longrightarrow &\epsilon 
\\  & &\sigma &\mapsto & \sigma & & & & \\  & & & & \phi=\sigma\circ\varphi & \mapsto &\varphi & & \end{array} \]

The sequence  splits since $\pi\circ i=id_{\mathcal{H}}$ where $i:\mathcal{H}\longrightarrow Aff(G,\nabla)$ is the inclusion map. In fact it is an exact sequence of flat affine Lie groups (compare with Example 3.6 of \cite{MR}).

\noindent
Case 5. By Remark \ref{R:geodesicallycomplete}, the connection $\nabla_1(0)$ is complete , hence $Aff(G,\nabla_1(0))$ is isomorphic as Lie group to $Aff(\mathbb{R}^2)$. Therefore it admits an affine (symplectic) structure (see \cite{BM}).

\noindent
Case 6. For $j=3,4$, the Lie group $Aff(G,\nabla_j)$ is given by $$Aff(G,\nabla_j)=\left\{ \phi:G\longrightarrow G\mid \phi(x,y)=\left(ax,ay+b\right)\text{ where }a,b\in\mathbb{R}\text{ and } a>0\right\}.$$
Hence it is a Lie group isomorphic to $G_0$ and therefore it is an affine group.
\end{proof}

\begin{remark} In Cases 1 through 4 of the previous proof it can be verified that $Aff(G,\nabla)$ is isomorphic, as a Lie group, to the direct product of Lie groups $Aff(\mathbb{R})\times Aff(\mathbb{R})$. Therefore in those cases we have that 
\[ \dim[Aff(Aff(\mathbb{R}),\nabla^+)]=4. \] 
In Case 5. we have that $\dim[Aff(Aff(\mathbb{R}),\nabla^+)]=6$. Finally, in Case 6. we get that $\dim[Aff(Aff(\mathbb{R}),\nabla^+)]=2.$  
\end{remark}


We conclude the section with the following result.

\begin{theorem} The group $Aff(G,\nabla^+)$, where $\nabla^+$ is  any FLIAS  on $G=Aff(\mathbb{R})$, is a symplectic Lie group. 
\end{theorem}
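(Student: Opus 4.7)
The plan is to dispose of the theorem case by case, following the partition into six cases established in the proof of the previous theorem together with the dimension calculation in the remark that follows it. The key observation is that the previous theorem has already identified $Aff(G,\nabla^+)$, up to isomorphism of Lie groups, in each case, so it suffices to exhibit (or cite) a left invariant symplectic form on each of the resulting model groups.

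In Cases 1 through 4, the preceding remark states that $Aff(G,\nabla^+)\cong Aff(\mathbb{R})\times Aff(\mathbb{R})$ as Lie groups. Here I would use the well known elementary fact that every $2$-dimensional non-abelian Lie group is a symplectic Lie group: any nonzero left invariant $2$-form on a $2$-dimensional manifold is automatically nondegenerate, and the closedness condition $d\omega=0$ is vacuous since $\omega$ is a top form. Concretely, on $Aff(\mathbb{R})$ one can take $\omega^+=dx\wedge dy/x^2$ on the half-plane model $\{(x,y):x>0\}$, and extend by the left action. The product of two symplectic Lie groups is then symplectic via the sum of the pullbacks of the factor forms, giving the conclusion in these four cases.

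For Case 5, where $Aff(G,\nabla_1(0))\cong Aff(\mathbb{R}^2)$, the existence of a left invariant symplectic structure is the content (already invoked earlier in the paper, see Proposition~\ref{P:completaimplicabiinvariante}) of the fact that the coadjoint representation of $Aff(\mathbb{R}^n)$ is \'etale, giving a left invariant symplectic form via the Kirillov--Kostant--Souriau construction on an open coadjoint orbit; this is the content of \cite{BM}, \cite{R}. In Case 6, where $Aff(G,\nabla_j)\cong G_0$ is the connected component of $Aff(\mathbb{R})$ itself, we again invoke the elementary $2$-dimensional argument from the first paragraph.

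I do not expect any serious obstacle: the heavy lifting was done in the previous theorem, where the isomorphism type of $Aff(G,\nabla^+)$ was pinned down for every isomorphism class of FLIAS on $Aff(\mathbb{R})$. The only point that requires a bit of care is recording, in each case, an explicit left invariant symplectic form (or equivalently a nondegenerate scalar $2$-cocycle on the relevant Lie algebra) so that the statement is self-contained; but because every Lie group appearing in the list has even dimension and is either $2$-dimensional non-abelian, a product of such, or the classical affine group $Aff(\mathbb{R}^2)$, no new computation beyond invoking the relevant known constructions is required.
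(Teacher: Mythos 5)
Your proof is correct, and in the four-dimensional cases it takes a genuinely different route from the paper's. The paper handles those cases by observing that $Aff(G,\nabla^+)$ is locally isomorphic to $Aff(\mathbb{R},\nabla^0)\rtimes_{Ad}Aff(\mathbb{R},\nabla^0)$, i.e.\ the double Lie group of $Aff(\mathbb{R},\nabla^0)$, and then cites \cite{DM2} for the existence of a left invariant symplectic structure on such a double; you instead use the direct product decomposition $Aff(\mathbb{R})\times Aff(\mathbb{R})$ recorded in the remark after the previous theorem, together with the elementary facts that any two-dimensional Lie group carries a left invariant symplectic form (a nonzero left invariant $2$-form is automatically closed and nondegenerate) and that a product of symplectic Lie groups is symplectic via the sum of the pulled-back factor forms. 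The two descriptions are compatible, since for any group $H$ one has $H\rtimes_{Int}H\cong H\times H$ via $(g,h)\mapsto(gh,h)$; your version is more elementary and self-contained, while the paper's approach buys the extra information that these groups arise as doubles in the classical Yang--Baxter framework and are moreover pseudo-K\"ahler. Your treatment of the six-dimensional case ($Aff(\mathbb{R}^2)$ via the \'etale coadjoint representation, \cite{BM}, \cite{R}) and of the two-dimensional case agrees with the paper. One small point worth recording explicitly: a left invariant symplectic structure is equivalent to a nondegenerate scalar $2$-cocycle on the Lie algebra, so local isomorphism already suffices to transport it; hence your argument really only needs the Lie algebra decomposition $aff(\mathbb{R})\oplus aff(\mathbb{R})$ (which is easily checked from the brackets listed in Cases 1 through 4, e.g.\ in Case 1 the commuting ideals $span\{e_1,e_2\}$ and $span\{e_1+e_3,e_4\}$), and not the global group isomorphism asserted in the remark.
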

\begin{proof} By the previous proof $Aff(G,\nabla^+)$ has  dimension 2, 4 or 6. 

If $Aff(G,\nabla^+)$ is 2-dimensional, it follows that $Aff(G,\nabla^+)$ is isomorphic as Lie group to $Aff(\mathbb{R},\nabla^0)$ and hence it is symplectic (see Subsection \ref{S:simplecticstructures}).

If $Aff(G,\nabla^+)$ is 6-dimensional, we get that $Aff(G,\nabla^+)$ is isomorphic to $Aff(\mathbb{R}^2,\nabla^0)$, thus it is symplectic (see \cite{BM}). 

If $Aff(G,\nabla^+)$ is 4-dimensional, it follows from the proof of the previous theorem that $Aff(G,\nabla^+)$ is locally isomorphic to $Aff(\mathbb{R},\nabla^0)\rtimes_{Ad}Aff(\mathbb{R},\nabla^0),$ where $Ad$ is the adjoint action. That is, $Aff(G,\nabla^+)$ is the double Lie group of $Aff(\mathbb{R},\nabla^0)$ and therefore it is symplectic (see \cite{DM2}). 
These  groups are also pseudo-K\"ahler Lie groups, see Theorem \ref{P:cotangentaskalerliegroup}.
\end{proof}

\section{Special flat left invariant affine structures on $Aff(\mathbb{R})$} \label{S:specialleftsymmetricproducts}
Among the FLIAS on $Aff(\mathbb{R})$ we identify those   of special interest in geometry. More specifically we identify affine pseudo-Riemannian structures,  affine symplectic structures,  affine K\"ahler structures, and affine Hessian structures.

\subsection{Complete Structures } \label{S:geodesicallycomplete}

It is known that a FLIAS on a Lie group $G$  determined by a left symmetric product  in $\mathfrak{g}=Lie(G)$, is geodesically complete if and only if $tr(R_b)=0$ for all $b\in\mathfrak{g}$, where $R_b$ is the matrix of the transformation defined by $R_b(a)=a\cdot b$ (see \cite{H} and \cite{M}). A direct calculation shows that $\nabla_1(0)$ is the unique structure on $Aff(\mathbb{R})$ with this property (anyone can verify this using the equations of the geodesics as well).

\begin{remark} \label{R:geodesicallycomplete} Let $G$ be the Lie group of affine transformations of the line. There exists a unique geodesically complete FLIAS $\nabla$ on $G$. This connection is given by
\[ \nabla_{e_1^+}e_1^+=0,\qquad\nabla_{e_1^+}e_2^+=e_2^+,\qquad\nabla_{e_2^+}e_1^+=0\quad\text{and}\quad
\nabla_{e_2^+}e_2^+=0.\]
Geodesics for this connection are given below in Equations \eqref{Eq:geodesicacompleta} and \eqref{Eq:geodesicacompletaa=0}.
\end{remark}

\subsection{Hessian structures} \label{S:hessian} Recall that an affine manifold is pseudo-Hessian if there exists a pseudo-Riemannian metric which in local affine coordinates is the Hessian of a function (see \cite{Kz} and \cite{Sh}). For an affine Lie group $(G,\nabla^+)$ this means that there is a left invariant pseudo-Riemannian metric  on ${G}$ verifying the identity
  $$\langle \nabla_{X^+}^+Y^+,Z^+\rangle-\langle X^+,\nabla_{Y^+}^+Z^+\rangle=\langle \nabla_{Y^+}^+X^+,Z^+\rangle-\langle Y^+,\nabla_{X^+}^+Z^+\rangle$$
The affine structures on $Aff(\mathbb{R})$ determined by  $\nabla_1(-1),\ \nabla_2(-2),\ \nabla_3 $ and $\nabla_4$ are Hessian  relative to the metrics $\langle\ ,\ \rangle$  given by the matrices $A_i=M\left(\langle\ ,\ \rangle,\{e_1,e_2\}\right)$, respectively, where $A_1=\left[\begin{matrix}0&1\\1&0\end{matrix}\right]$, $A_2=\left[\begin{matrix}1&-1/4\\-1/4&\ \ 1/8\end{matrix}\right]$, $A_3=\left[\begin{matrix}2&0\\0&1\end{matrix}\right]$ and $A_4=\left[\begin{matrix}2&\ \ 0\\0&-1\end{matrix}\right]$. The structures $\nabla_2(-2)$ and $\nabla_3 $  are both  Riemannian while  the structures $\nabla_1(-1)$ and $\nabla_4$ are pseudo-Riemannian.

\begin{lemma} The symplectic cotangent Lie group $T^*G$ of $(G,\nabla^+,g^+)$, where $\nabla^+$ is one of $\nabla_1(-1),\ \nabla_2(-2),\ \nabla_3 $ and $\nabla_4$ and $g^+$ the corresponding metric given above, is a pseudo-K\"ahler Lie group.
\end{lemma}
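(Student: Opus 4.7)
The plan is to apply Theorem \ref{P:cotangentaskalerliegroup} directly: that theorem manufactures a pseudo-K\"ahler structure on the symplectic cotangent Lie group $T^*G = \mathfrak{g}^*\rtimes_\rho G$ out of any flat affine pseudo-Hessian Lie group $(G,\nabla^+,g^+)$. The paragraph immediately preceding the lemma already asserts that each of the four pairs $(\nabla_i^+, g_i^+)$ in question is pseudo-Hessian, with the metrics $g_i^+$ realised by the explicit matrices $A_1,A_2,A_3,A_4$. Thus the lemma is essentially a corollary of Theorem \ref{P:cotangentaskalerliegroup}; the real content is confirming the Hessian hypothesis in each case.

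First I would check non-degeneracy of each $A_i$ by a one-line determinant computation, so that $g_i^+$ is a genuine left invariant pseudo-Riemannian metric on $G$. Next I would verify the pseudo-Hessian identity
\[
\langle \nabla_{X^+}^+Y^+,Z^+\rangle-\langle X^+,\nabla_{Y^+}^+Z^+\rangle
=\langle \nabla_{Y^+}^+X^+,Z^+\rangle-\langle Y^+,\nabla_{X^+}^+Z^+\rangle
\]
on basis vectors. Both sides are trilinear in $(X,Y,Z)$ and antisymmetric under $X\leftrightarrow Y$, so it suffices to test the triples $(e_1,e_2,e_1)$ and $(e_1,e_2,e_2)$. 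For each of the four left symmetric products $\mathcal{F}_I(-1,-1)$, $\mathcal{F}_{II}(-2,2)$, $\mathcal{R}_1$, and $\mathcal{R}_2$ on $\mathfrak{g}=\mathrm{Lie}(Aff(\mathbb{R}))$, this reduces to a short numerical check against the entries of $A_i$, which is routine.

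Once the pseudo-Hessian hypothesis is in place for each of the four structures, Theorem \ref{P:cotangentaskalerliegroup} produces on $T^*G = \mathfrak{g}^*\rtimes_\rho G$ the left invariant symplectic form $\omega((\alpha,\sigma),(\beta,\tau)) = \alpha(\tau) - \beta(\sigma)$, together with the integrable complex structure $j$ defined by $j(x) = -g_i^+(x,\cdot)$ for $x\in\mathfrak{g}$ and $j(\alpha) = x$ when $\alpha = g_i^+(x,\cdot)$, and the compatible non-degenerate bilinear form $\langle u,v\rangle := \omega(u,j(v))$. This is exactly a pseudo-K\"ahler structure on $T^*G$. No substantive obstacle arises: the only labour is the four pseudo-Hessian verifications, all of which are dictated by the explicit products and metrics already displayed in the text.
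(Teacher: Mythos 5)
Your proposal is correct and takes essentially the same route as the paper, whose proof is the one-line citation of Theorem \ref{P:cotangentaskalerliegroup}; the pseudo-Hessian hypothesis for the four structures is taken as already established in the paragraph preceding the lemma. Your additional explicit verification of that hypothesis is a reasonable (and routine) supplement, not a different argument.
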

\begin{proof} The proof follows from Theorem \ref{P:cotangentaskalerliegroup}.
\end{proof}

\subsection{Lorentzian structures} \label{Ss:Lorentzian} Let $\langle\ ,\ \rangle$ be the left invariant Lorentzian metric on $Aff(\mathbb{R})$ given by the  matrix  $M(\langle\ ,\ \rangle,\{e_1,e_2\})=\left[\begin{matrix}\ \ 1&-1\\-1&\ \ 0\end{matrix}\right]$.  The corresponding Levi-Civita conexion    $\nabla_L=\nabla_1(-1)$ is flat.

 The group $Aff(G,\nabla_1(-1))$, where $G=Aff(\mathbb{R})$, is a 4-dimensional flat affine Lie group (see Section \ref{S:lastsection}). The group of diffeomorphisms of $Aff(\mathbb{R})$ preserving both the affine and the Lorentzian structures of $(Aff(\mathbb{R}),\nabla_L)$ is the 2-dimensional Lie group isomorphic to $Aff(\mathbb{R})$ given by
\[ \{\phi:Aff(\mathbb{R})\longrightarrow Aff(\mathbb{R})\mid \phi(x,y)=(ax,ay+b) \text{ with } a,b\in\mathbb{R}\text{ and }a\ne0\}.\]

\subsection{Flat left invariant affine symplectic  structures} The co-adjoint action of the group $Aff(\mathbb{R})$  on $aff(\mathbb{R})^*$, the dual space of  $aff(\mathbb{R})$, is given by $$\sigma\cdot(\alpha e_1^*+\beta e_2^*)=Ad^*(\sigma)(\alpha e_1^*+\beta e_2^*)=\left(\alpha+\frac{b\beta}{a}\right)e_1^*+\frac{\beta}{a}e_2^*,$$ where $\sigma=(a,b)$. This action has two open orbits and a line of fixed points. Since every open orbit of the co-adjoint representation is a symplectic manifold, the pullback by the orbital map on the $Orb(\epsilon)$  defines a left invariant symplectic form on $Aff(\mathbb{R})$  given by  $w^+(e_1^+,e_2^+)=1$. The formula 
\begin{equation}\label{Eq:leftsymmetricproductbyasymplecticform} \omega^+((a\cdot b)^+,c^+)= -\omega^+(b^+,[a^+,c^+]).\end{equation}
 determines the  affine structure  $\nabla_2$. Notice that $(Aff(\mathbb{R})_0,\nabla_2)$ is a symplectic Lie group in the sense of Lichnerowicz-Medina (see \cite{LM}).

\subsection{Symplectic Connections} \label{S:simplecticstructures} Recall that a symplectic connection  on a symplectic manifold $(M,\Omega)$ is a linear connection $\nabla$ satisfying the condition 
\[ \Omega\left(\nabla_XY,Z\right)+\Omega\left(Y,\nabla_XZ\right)=X\cdot \Omega(Y,Z)\quad\text{for all}\quad X,Y,Z\in\mathfrak{X}(M).\] 
Let $(G,\omega^+)$ be a symplectic Lie group. That $\nabla$ is a symplectic flat left invariant  linear connection,  means that there exists a bilinear product on $\mathfrak{g}$ verifying $\omega(x y,z)+\omega(y,x z)=0$ for all $x,y,z\in\mathfrak{g}$. 
A direct calculation shows that   $\nabla_1(-1)$ and $\nabla_2(-1/2)$ on $(Aff(\mathbb{R}),\omega^+)$ are symplectic flat affine connections. 

It can also be verified that $(Aff(G),g^+,j^+)$ is a K\"ahler   Lie group by taking the left invariant Riemannian metric $g^+$ with matrix  $M(\langle\ ,\ \rangle,\{e_1,e_2\})=\left[\begin{matrix}1&0\\0&1\end{matrix}\right]$ and the left invariant complex structure determined by $j^+(e_1)=-e_2$ and $j^+(e_2)=e_1$. This structure  is holomorphic  non-Ricci Flat    (see \cite{LM} and \cite{L}).


\section{appendix} \label{S:apendix}

\subsection{\'Etale Affine Representations of $Aff(\mathbb{R})_0$} \label{S:affineetalerepresentations} Let ${G}$ be a connected  Lie group and $\mathfrak{g}=Lie(G)$ its Lie algebra. A bilinear product $L_a(b)=a b$ on $\mathfrak{g}$ is a left symmetric product compatible with the bracket in  $\mathfrak{g}$ if and only if the map $\theta: \mathfrak{g}\longrightarrow \mathfrak{g}$ defined by $ \theta( x)=(x,L_x)$ is a representation of the Lie algebra $\mathfrak{g}$ by (classical) affine endomorphisms of $\mathfrak{g}$.   The exponential map determines a unique Lie group homomorphism  $\rho:\widehat{G}\longrightarrow Aff(\mathfrak{g})=\mathfrak{g}\rtimes GL(\mathfrak{g})$. If $\rho$ is given by $\rho(\sigma)=(Q(\sigma),F_\sigma),$ then $\sigma\rightarrow Q(\sigma)$ is a 1-cocycle of $\widehat{G}$ relative to the linear representation $\sigma\mapsto F_\sigma$.  The orbital map $\pi:\widehat{G}\longrightarrow Orb(0)$ defined by $\pi(\sigma)=\rho(\sigma)(0)$ is a local  diffeomorphism equivariant by the actions of $\widehat{G}$ on itself by left multiplications and of the group $\rho(\widehat{G})$ on $\mathfrak{g}$ by affine transformations. So the $\pi$-pullback $\nabla^+$ of $\nabla^0$ is a FLIAS. Moreover, this structure on $\widehat{G}=G$ is geodesically complete if and only if the action associated to $\rho$ is transitive (see \cite{M}).

It is easy to see that   $Q(\sigma):G\longrightarrow\mathfrak{g}$ is a developing map of $(G,\nabla^+)$.

It is easy to check that the exponential map of $\mathfrak{g}=aff(\mathbb{R})$ is given by
\[ \begin{array}{ccclc}exp:&\mathfrak{g}&\longrightarrow & \widehat{G}\\&(a,b)&\mapsto& \begin{cases}\left(e^{a},\frac{b}{a}\left(e^{a}-1\right)\right),&\text{for } a\ne0\\ 
(1,b)&\text{for } a=0\end{cases}\end{array}.\] 
Using this and Theorem \ref{T:developant} (the Development Theorem), one can find that the corresponding affine \'etale representations of $G_0=Aff(\mathbb{R})_0$, for each  FLIAS, are as follows

\noindent \textbf{Family $\mathcal{F}_1(\alpha)$.}
For  $\alpha\notin\{0,1\}$
\[ \begin{array}{clclclc} \rho_1(\alpha):&G_0&\longrightarrow &Aff(\mathfrak{g})\subseteq GL(\mathfrak{g}\oplus\mathbb{R})\\& (x,y)&\mapsto&
\left[\begin{matrix}x^{\alpha}&0&\frac{1}{\alpha}(x^{\alpha}-1)\\ \frac{\alpha}{\alpha-1}(x^\alpha-x)&x&y+\frac{1}{\alpha-1}(x^\alpha-\alpha x)+1\\0&0&1\end{matrix}\right]
\end{array} \] 
when $\alpha=1$
\[ \begin{array}{clclclc} \rho_1:&G_0&\longrightarrow &Aff(\mathfrak{g})\\& (x,y)&\mapsto& 
\left[\begin{matrix}x&0&x-1\\x\ln x&x&1+y+x(\ln x-1)\\0&0&1\end{matrix}\right]\end{array} \]
when $\alpha=0$ we get the  representation
\[ \begin{array}{clclclc} \rho_1(0):&G_0&\longrightarrow &Aff(\mathfrak{g})\\& (x,y)&\mapsto&
\left[\begin{matrix}1&0&\ln x\\0&x&y\\0&0&1\end{matrix}\right]
\end{array} \]

\noindent \textbf{Family $\mathcal{F}_2(\alpha)$.} For $\alpha\notin\{0,-1\}$ 
\[ \begin{array}{clclclc} \rho_2(\alpha):&G_0&\longrightarrow &Aff(\mathfrak{g})\\& (x,y)&\mapsto&
\left[\begin{matrix}x^{\alpha }&0&\frac{1}{\alpha}(x^{\alpha }-1)\\ \alpha x^{\alpha }(y -x+1)&x^{\alpha +1}&x^{\alpha}(y+1)-\frac{1}{\alpha+1}(\alpha x^{\alpha+1}+1) \\0&0&1\end{matrix}\right]
\end{array} \]

and when $\alpha=-1$, we get the  representation
\[ \begin{array}{clclclc} \rho_2(-1):&G_0&\longrightarrow &Aff(\mathfrak{g})\\&  (x,y)&\mapsto&
\left[\begin{matrix}1/x&0&1-1/x\\  1-y/x-1/x&1&1/x+y/x-1+\ln x\\ 0&0&1\end{matrix}\right]
\end{array} \]

\noindent \textbf{ $\mathcal{A}_1$}
\[ \begin{array}{clclclc} \rho_1:&G_0&\longrightarrow &Aff(\mathfrak{g})\\& (x,y)&\mapsto& 
\left[\begin{matrix}x&0&x-1\\0&x&y\\0&0&1\end{matrix}\right]\end{array}
\]

\noindent \textbf{ $\mathcal{A}_2$} 
\[ \begin{array}{clclclc} \rho_2:&G_0&\longrightarrow &Aff(\mathfrak{g})\\& (x,y)&\mapsto&
\left[\begin{matrix}1/x&0&1-1/x\\ -y/x&1&y/x\\0&0&1 \end{matrix}\right] \end{array}
\]

\noindent \textbf{ $\mathcal{R}_1$}
\[ \begin{array}{clclclc} \rho_3:&G_0&\longrightarrow &Aff(\mathfrak{g})\\& (x,y)&\mapsto&
\left[\begin{matrix}x^{2}&xy&\frac{1}{2}(x^2+y^2-1)
\\0&x&y\\0&0&1\end{matrix}\right]\end{array} \]

\noindent \textbf{ $\mathcal{R}_2$}
\[ \begin{array}{clclclc} \rho_4:&G_0&\longrightarrow &Aff(\mathfrak{g})\\& (x,y)&\mapsto&
\left[\begin{matrix}x^{2}&-xy&\frac{1}{2}(x^2-y^2-1)
\\0&x&y\\0&0&1\end{matrix} \right]\end{array} \]

Notice that the $(\mathfrak{g},\mathcal{R}_1)$ and $(\mathfrak{g},\mathcal{R}_2)$ are not isomorphic real LSA's. However their corresponding affine \'etale representations $\rho_3$ and $\rho_4$ are  isomorphic.

\subsection{Geodesics in $Aff(\mathbb{R})_0$} \label{S:geodesics}

 In terms of global coordinates $x$ and $y$, geodesics  $\gamma(t)=(x(t),y(t))$ are  solutions to the system of differential equations
\[
\begin{cases}
x''(t)+\Gamma_{1,1}^1 [x'(t)]^2+(\Gamma_{1,2}^1+\Gamma_{2,1}^1 )x'(t)y'(t)+\Gamma_{2,2}^1 \left[y'(t)\right]^2=0 & \\
y''(t)+\Gamma_{1,1}^2 (x'(t))^2+(\Gamma_{1,2}^2+\Gamma_{2,1}^2 )x'(t)y'(t)+\Gamma_{2,2}^2 (y'(t))^2=0
\end{cases} \] 
where $\Gamma_{ij}^k$ denote the Christoffel symbols of the connection   
\[ (e_i\cdot e_j)^+=\displaystyle{\nabla_{e_i ^+}^+ e_j ^+=\sum_{k=1}^2\Gamma_{ij}^ke_k^+.} \] 
Since  we are considering only left invariant connections, it is enough to know geodesics through the group identity $\epsilon=(1,0)$, i.e., verifying the  initial conditions $(x(0),y(0))=(1,0)$ and $(x'(0),y'(0))=(a,b)$.

Below we display, for each affine structure in $Aff(\mathbb{R})_0$, the geodesics through $\epsilon$. 

\noindent \textbf{Geodesics for} $\bf{\nabla_{1}(\alpha)}.$ For $a\ne0$

\begin{align} \notag
&\begin{cases} x(t)=1+\frac{1}{\alpha}\ln\vert a\alpha t+1\vert & \\ 
 y(t)=\frac{b(\alpha-1)-\alpha a}{a(\alpha-1)^2}\left[(a\alpha t+1)^{\frac{\alpha-1}{\alpha}}-1\right]+\frac{1}{\alpha-1}\ln|a\alpha t+1| & \end{cases}  \text{if } \alpha\notin\{0,1\}. \\ \notag
& \begin{cases} x(t)=1+\ln\vert at+1\vert & \\ y(t)=\frac{1}{2a}\ln\vert at+1\vert\left[2b-a\ln\vert at+1\vert\right] & \end{cases}\qquad\qquad\qquad\quad  \text{if } \alpha=1 \\ \label{Eq:geodesicacompleta} &\begin{cases} x(t)=at+1 &\\ y(t)=\frac{b}{a}\left(1-e^{-at}\right), & \end{cases} \qquad\qquad\qquad\qquad\qquad\qquad\qquad\  \text{ if } \alpha=0. \end{align}

  and for $a=0$ we get
 \begin{equation} \label{Eq:geodesicacompletaa=0} \begin{array}{cccc}  x(t)=1&\text{ and }&y(t)=bt& \text{for any } \alpha \end{array}\end{equation} 

\noindent  \textbf{Geodesics for} $\bf{\nabla_2(\alpha)}.$ For $a\ne0$
\[ \begin{array}{lcc} \begin{cases} x(t)=1+\frac{1}{\alpha}\ln\vert a\alpha t+1\vert & \\
y(t)=\frac{b(\alpha+1)+\alpha a}{(\alpha+1)^2a}\left[1-(\alpha at+1)^{-\frac{2\alpha+1}{\alpha}}\right]-\frac{1}{\alpha+1}\ln|\alpha at+1| & \end{cases} &\text{ if } \alpha\ne-1
 \\ \ & \ \\ \begin{cases}  x(t)=1-\ln\vert at-1\vert & \\ y(t)=\frac{-1}{2a}\ln\vert at-1\vert\left[a\ln\vert at-1\vert+2b\right] & \end{cases} & \text{ if } \alpha=-1
\end{array} \] 

and for $a=0$ we get
\[ x(t)=1\qquad\text{and}\qquad y(t)=bt,\qquad \text{for any }\alpha.\] 

\noindent  \textbf{Geodesics for } $\mathbf{\nabla_1}.$ For $a\ne0$ 
\[ x(t)=1+\ln\vert at+1\vert \quad\text{and}\quad y(t)=\frac{b}{a}\ln\vert at+1\vert.\]
and for $a=0$ we get
\[ x(t)=1\qquad\text{and}\qquad y(t)=bt.\] 

\noindent  \textbf{Geodesics for } $\mathbf{\nabla_2}.$ For   $a\ne0$
\[  x(t)=1-\ln\vert 1-at\vert \qquad \text{and} \qquad y(t)=\frac{-b}{a}\ln\vert1-at\vert  \]

and for   $a=0$ we get
\[ x(t)=1\qquad\text{and}\qquad y(t)=bt.\] 

\noindent  \textbf{Geodesics for } $\mathbf{\nabla_3}.$ For $a$ and $b$ no both zero
\begin{align} \notag x(t)&=\frac{1}{2}ln|-b^2t^2+2at+1|+1 \\\notag  y(t)&=arccos\left(\frac{\sqrt{a^2+b^2-(b^2t-a)^2}}{\sqrt{a^2+b^2}}\right)-arccos\left(\frac{|b|}{\sqrt{a^2+b^2}}\right).\end{align}
If $a=b=0$ we get $x(t)=1$ and $y(t)=0$.

\noindent  \textbf{Geodesics for } $\mathbf{\nabla_4}.$ For $a\geq|b|$
\begin{align} \notag x=&\dfrac{1}{2}\ln\vert b^2t^2+2at+1\vert+1\\\notag
 y=&\ln\left\vert b^2t+a+\vert b\vert(|b^2t^2+2at+1|)^{1/2}\right\vert-\ln\vert a+\vert b\vert\vert
\end{align}
 and for $|b|> a$ we have 
\begin{align} \notag x=&\dfrac{1}{2}\ln\vert b^2t^2+2at+1\vert+1\\\notag
 y=&\frac{1}{2}\ln\left|\frac{|b|\sqrt{|b^2t^2+2at+1|}+b^2t+a|}{|b|\sqrt{|b^2t^2+2at+1|}-b^2t-a}\right|-\frac{1}{2}\ln\left|\frac{|b|+a}{|b|-a}\right|
\end{align}

\noindent
Acknowledgment

 We are grateful to the referee of this paper for his suggestions and comments which helped to improve the presentation of this work.

\end{document}